\newcommand*\bigcdot{\mathpalette\bigcdot@{.5}}
\newcommand*\bigcdot@[2]{\mathbin{\vcenter{\hbox{\scalebox{#2}{$\m@th#1\bullet$}}}}}
\def\Kmath{\mathbb{K}}
\newcommand{\el}{\end{list}}
\newtheorem{Thm}{Theorem}[section]
\newtheorem{Lem}[Thm]{Lemma}
\newtheorem{Prop}[Thm]{Proposition}
\newtheorem{Def}[Thm]{Definition}
\newtheorem{Exm}[Thm]{Example}
\newtheorem{Rem}[Thm]{Remark}
\newtheorem{Note}[Thm]{Note}
\begin{document}

\title{Deformations of Courant pairs and Poisson algebras}
\author{Ashis Mandal and Satyendra Kumar Mishra}
\footnote{The research of S.K. Mishra is supported by CSIR-SPM fellowship grant 2013.}
\address{Department of Mathematics and Statistics, Indian Institute of Technology,
Kanpur 208016, India.}
\begin{abstract}{

We study deformation of Courant pairs with a commutative algebra base. We consider the deformation cohomology bi-complex and describe a universal infinitesimal deformation. In a sequel, we formulate an extension of a given deformation of a Courant pair to another with extended base. This leads to describe the obstruction in extending a given deformation. We also discuss about the construction of versal deformation of Courant pairs. As an application, we explicitly compute universal infinitesimal deformation of  Poisson algebra structures
  on the three dimensional complex Heisenberg Lie algebra. This provides a comparison of the second deformation cohomology spaces of these Poisson algebra structures by considering them in the  category of Leibniz pairs and Courant pairs, respectively.
}
\end{abstract}
\footnote{AMS Mathematics Subject Classification (2010): $17$A$32, $17$B$63$,  $ $17$B$66$,. }
\keywords{Courant algebra, Leibniz algebra, Lie algebra, cohomology of associative, Lie and Leibniz algebras}
\maketitle
\tableofcontents

\section{Introduction}
We consider the notion of Courant pair as a special type of Courant algebras. Initially, Courant algebras were introduced by H. Bursztyn and coauthors \cite{BCG}, in the context of reduction of Courant algebroids and generalized complex structures. By  definition, a Courant algebra over a Lie algebra $\mathfrak{g}$  is a Leibniz algebra $L$ equipped with a Leibniz algebra homomorphism to the Lie algebra $\mathfrak{g}$. This new type of  algebra is used to interpret the moment map in symplectic geometry as an object which controls an extended part of the action of Courant algebras on Courant algebroids. In fact, any Leibniz algebra $L$ can be viewed as a Courant algebra over the associated Lie algebra $L_{Lie}$. We consider the notion of Courant pair as a Courant algebra over the Lie algebra $Der(A)$, the space of  linear derivations of an associative algebra $A$ (over coefficient field $\mathbb{K}$). 

By definition, a Courant pair denoted by $(A, L)$ consists of an associative algebra $A$ and a Leibniz algebra $L$ over the same coefficient field $\mathbb{K}$, equipped with a Leibniz algebra homomorphism  $\mu :L \rightarrow \mathfrak{g}$, where $\mathfrak{g}$ denotes the Lie algebra of $\mathbb{K}$-linear derivations on $A$  (under the commutator bracket ). If we take the Leibniz algebra in this definition is in particular a Lie algebra then the Courant pair becomes  a Leibniz pair $(A, L)$ -  a generalization of the Poisson algebras, introduced in \cite{LeibP} by M. Flato et al. A formal deformation theory of Leibniz pairs is  studied in \cite{LeibP}. Later it was found as a special case of a homotopy algebra introduced by H. Kajiura and J. Stashef in \cite{KS06}, where they referred it as an open closed homotopy algebra (OCHA). It is evident that Courant pair ( a special type of Courant algebra) naturally appears when one consider a Lie algebroid \cite{LieA} or more generally a Leibniz algebroid, a Loday algebroid, and a Courant algebroid (\cite{FreeN}, \cite{CARoth}, \cite{FreeN}, \cite{BCG}, \cite{KW}, \cite{RW})) over a smooth manifold. In all these cases, the Lie algebras of the associated Courant algebra structures are the Lie algebra of smooth vector fields or space of linear derivations of the associative algebra of smooth functions defined on the manifold. 

In \cite{MM2016}, we defined a deformation complex of a Courant pair and described a one parameter formal deformations of Courant pairs in terms of this deformation complex. Here, we study deformation of Courant pairs with a commutative algebra base. The characterization of all non-equivalent deformations of a given  object is one of the main problems in deformation theory. One way to solve this problem is to construct a versal deformation of the given object, which induces all non-equivalent deformations. The existence of such a versal deformation for algebraic objects, follows by taking few small restrictions on the cohomology space. This versal deformation is unique at the infinitesimal level. For instance, such a construction was first given for Lie algebras in \cite{FiFu}, and  for Leibniz algebra was developed in \cite{FMM}. This motivates for a deformation theory and a construction of versal deformation to follow for more general object namely for Courant pair. We discuss a construction of such a versal deformation, which induces all non-equivalent deformations. Moreover, one can skew-symmetrize this construction to get a  versal deformation for Leibniz pairs \cite{LeibP}. In turn, one gets a versal deformation for Poisson algebras as well by observing it as an object in this more general notion of Courant pair.

 The category of Leibniz pairs is a full subcategory of the category of Courant pairs. We view a Poisson algebra  as a Courant pair and deduce a new deformation complex for Poisson algebras. We consider the classification of Poisson algebra structures on the three dimensional complex Heisenberg Lie algebra appeared in the work of Goze and Remm in \cite{Goze}. Then we compare the second cohomology spaces (of these Poisson algebras) obtained from the deformation complex in \cite{LeibP} and the deduced deformation complex in this paper, respectively. We present an explicit computation of a universal infinitesimal deformation of the Poisson algebra structures as an object in both the categories of Leibniz pairs and Courant pairs. It turns out that, the deformation of Poisson algebras in the category of Courant pairs not only covers all the Leibniz pair deformations but it also gives new deformations which are only Courant pairs.

In the second section, we recall several definitions related to Courant pairs and mention about the deformation cohomology defined in \cite{MM2016}. In the third section, we introduce a deformation of a Courant pair with a commutative algebra base. We construct a universal infinitesimal deformation of a Courant pair. Subsequently, we also construct a versal deformation of a Courant pair. In the last section, we deduce a deformation complex of a Poisson algebra by considering it a Courant pair. As an example, we consider the Poisson algebra structures on three dimensional complex Heisenberg Lie algebra. 

\section{Preliminaries}
Throughout we will consider vector spaces over a field $\mathbb{K}$ of characteristics zero and all maps are $\mathbb{K}$-linear unless otherwise it is specified. 

\subsection{Courant algebras over a Lie algebra}
Now, we recall the definition of  Courant algebras  from \cite{BCG}. Let $\mathfrak{g}$  be a Lie algebra over $\mathbb{K}$.
\begin{Def}\label{Courant algebra}
A {\it Courant algebra }  over a Lie algebra $\mathfrak{g}$  is a Leibniz algebra $L$ equipped with a homomorphism of Leibniz algebras $\pi: L\rightarrow\mathfrak{g}$. We will simply denote it as $\pi: L \longrightarrow \mathfrak{g}$ is a Courant algebra.
\end{Def}

\begin{Exm}
A Courant algebroid over a smooth manifold $M$  gives an example of a Courant algebra over the Lie algebra of vector fields $\mathfrak{g}=\Gamma(TM)$ by taking $\mathfrak{a}$ as the Leibniz algebra structure on the space of sections of the underlying vector bundle of the Courant algebroid.  From \cite{RW} it follows that any Courant algebra is actually an example of a $2$- term $L_\infty$ algebra \cite{BC}.
\end{Exm}

\begin{Def}\label{exact Courant algebra}
An  {\it exact} Courant algebra  over a Lie algebra $\mathfrak{g}$ is a Courant algebra $\pi: L \longrightarrow \mathfrak{g}$ for which $\pi$ is a surjective linear map and $\mathfrak{h}=ker (\pi)$ is abelian, i.e. $[h_1,h_2]=0$ for all $h_1,h_2 \in \mathfrak{h}$.
\end{Def}
For an exact Courant algebra $\pi: L \longrightarrow \mathfrak{g}$, there are two canonical actions of  $\mathfrak{g}$ on $\mathfrak{h}$: 
$[g,h]=[a,h]$ and $[h,g]=[h,a]$ for any $a$ such that $\pi(a)=g$. Thus the $\mathbb{K}$-module $\mathfrak{h}$ equipped with these two actions denoted by the same bracket notation $[-,-]$, is a representation of $\mathfrak{g}$ ( viewed as a Leibniz algebra ). 

The next example will give a natural exact Courant algebra associated with any representation of the Lie algebra $\mathfrak{g}$.

\begin{Exm} As in Example \ref{Lex1}, $\mathfrak{g}$ be a Lie algebra acting on the vector space $V$. Then
$L =\mathfrak{g}\oplus V$ becomes a Courant algebra over $\mathfrak{g}$ with the Leibniz algebra homomorphism given by projection on $\mathfrak{g}$  and the Leibniz bracket on $L$ given by
$$ [(g_1,h_1),(g_2,h_2)]=([g_1,g_2], g_1. h_2),$$
where $g.h$ denotes the action of the Lie algebra $\mathfrak{g}$ on $V$.
\end{Exm}
In \cite {Mandal16} it is shown that exact Courant algebras over a Lie algebra $\mathfrak{g}$ can be characterised via Leibniz $2$- cocycles, and the automorphism group of a given exact Courant algebra is in a one-to-one correspondence with first Leibniz cohomology space of $\mathfrak{g}$. Exact Courant algebras also appeared in the general study of Leibniz algebra extension and a discussion of some unified product for Leibniz algebras is in \cite{AgMi13, Mil15}.

\begin{Exm}

If we consider  a Leibniz representation $\mathfrak{h}$ of the Lie algebra $\mathfrak{g}$, then
$L =\mathfrak{g}\oplus \mathfrak{h}$ becomes a Courant algebra over $\mathfrak{g}$ via the bracket
$$ [(g_1,h_1),(g_2,h_2)]=([g_1,g_2], [g_1, h_2]+[h_1,g_2]),$$
where the actions (left and right) of $\mathfrak{g}$ on $\mathfrak{h}$ are denoted by the same bracket $[-,-]$.
\end{Exm}

\subsection{Courant pairs}
We recall the notion of a Courant pair and some natural examples. We also recall definition of modules over a Courant pair.

Let $A$ be an associative algebra over $\mathbb{K}$ and by $\mathfrak{g}$, we denote the Lie algebra of linear derivations of $A$ with commutator bracket.
\begin{Def}
 A Courant pair  $ (A,L)$ consists of an associative algebra $A$ and a Leibniz algebra $L$  over the same  coefficient ring $\mathbb{K}$, and equipped with  a Leibniz algebra homomorphism
  $\mu: L\longrightarrow \mathfrak{g} $.
\end{Def}
\begin{Rem}\label{RemC}
A Courant pair $(A, L )$ can also be expressed as a triplet $(\alpha,\mu,\lambda)$, where $\alpha : A\times A\rightarrow A$ denotes the associative multiplication map on $A$, $\lambda : L\times L\rightarrow L $ denotes the Leibniz algebra bracket in $L$, and the homomorphism $\mu$ defines an action $\mu: L\times A\rightarrow A$ given by $\mu(x,a)=\mu(x)(a)$ for all $x \in L$ and $a \in A$.
\end{Rem}
Any Leibniz pair is a Courant pair. In particular,
\begin{enumerate}
  \item a Poisson algebra $A$ gives a Courant pair $(A,A)$ with the Leibniz algebra homomorphism $\mu=id$;
  \item any Lie algebroid $(E,[-,-], \rho)$ gives a Courant pair $(C^{\infty}(M),\Gamma E)$ with  the Leibniz algebra homomorphism $\mu = \rho$;. 
  \item any Lie- Rinehart algebra $(A, L)$ is also a Courant pair satisfying the additional conditions that A as a commutative algebra, $L$ as an $A$-module and $\mu: L \rightarrow \mathfrak{g}$ as an $A$-module morphism such that $[x,fy]=f[x,y]+\mu(x)(f)y;$ for $x,y\in L$ and $f\in A$. 
   \end{enumerate}
\begin{Exm}
Let A be an associative commutative algebra and a module over the Lie algebra $\mathfrak{g}=(Der(A),[-,-]_c)$, where  $\mathfrak{g}$ acts on $A$ via derivations. As in  Example (2.4), if we consider the Leibniz algebra $L = Der(A)\oplus A$ , then $(A,Der(A)\oplus A)$ is a Courant pair, where the Leibniz algebra morphism $\mu$ is given by
the projection map from $Der(A)\oplus A$ onto $Der(A)$.
\end{Exm}
\begin{Rem}
In particular, for $A$ to be the space of smooth functions on a smooth manifold this type of examples of Courant pair appears as twisted Dirac structure of order zero in \cite{ACR}.
\end{Rem}
In the sequel, one can find  Courant pairs associated to an algebra by considering modules and  flat connections on the module of the algebra:

Let $A$ be an associative algebra and $V$ be an unitary $A$-module. If $\nabla$ is a flat connection on $V$, then $V$ is a $Der(A)$-module. Therefore, hemisemidirect product of $Der(A)$ and $V$ defines a Leibniz bracket on the A-module $Der(A)\oplus V$ as
  $$[(X,v),(Y,w)]=([X,Y],\nabla_X(w)),$$
where $v,w\in V$. Therefore, we have a Courant pair $(A,Der(A)\oplus V)$ with Leibniz algebra morphism $\mu$ is given by the projection map onto $Der (A) $.   

\begin{Exm}
 Let $(E,[-,-],\rho)$ be a Lie algebroid over a manifold $M$. A flat $E$-connection on a vector bundle $F\rightarrow M$ is also called a representation of E on the vector bundle $F\rightarrow M$ (recall  from \cite{LieA}). Now, $\Gamma E$ is a Lie algebra and $\Gamma F$ is a $\Gamma E$-module (it follows from flatness of $\nabla$). Therefore, by taking hemisemidirect product of $\Gamma E$ and $\Gamma F$, we have a Leibniz algebra bracket on
$\Gamma E \oplus \Gamma F$ given by
$$[(X,\alpha),(Y,\beta)]=([X,Y],\nabla_X(\beta)) $$ 
for $X,Y \in \Gamma E$ and $\alpha,~ \beta\in \Gamma F $. Consider $A =C^{\infty}(M)$, and $L=\Gamma E \oplus \Gamma F$, then $(A,L)$ is a Courant pair with the Leibniz algebra morphism:
$$\mu :\Gamma E \oplus \Gamma F \rightarrow \chi(M)=Der(A)$$ 
where $\mu=\rho \circ \pi_{1}$, and $\pi_{1}$ is the projection map onto $\Gamma E$. 
\end{Exm} 

\begin{Exm}
Suppose $\mathcal{A}$ is a associative and commutative algebra. Let $\Omega^1_{\mathcal{A}}$ be the $\mathcal{A}$-module of K$\ddot{\mbox{a}}$hler differentials \cite{Khlr}, with the universal derivation $$ d_{0} : \mathcal{A}\rightarrow \Omega^1_{\mathcal{A}}.$$ Consider the $\mathcal{A}$-module $Der(A)\oplus \Omega^1_{\mathcal{A}}$ equipped with
the Leibniz bracket  given by $$[(X,\alpha),(Y,\beta)]=([X,Y],L_X\beta-i_Y d_0 \alpha).$$
 Then $(A,Der(A)\oplus \Omega^1_{\mathcal{A}})$ is a Courant pair. This shows that a Courant pair is also appeared in the context of Courant -Dorfman algebras introduced in \cite{CHRtb}.
\end{Exm}

 Let $M$ be an $(A, A)$-bimodule and $P$ be an $L$-module. We denote by $ A+M$ and $L+P$, respectively, the associative and Leibniz semi-direct products. Now we recall the definition of modules over a Courant pair:

\begin{Def}
 A module over a Courant pair $(A,L)$  means a pair $(M,P)$, where $P$ is a Leibniz algebra module over $L$, $M$ is an $(A, A)$-bimodule, and there is a Leibniz algebra homomorphism $\tilde{\mu}:~(L+P) \longrightarrow Der_{\mathbb{K}}(A+M)$ extending the Leibniz algebra morphism $\mu:~L\rightarrow DerA $ in the Courant pair $(A, L)$ in the following sense:
 \begin{enumerate}
 \item $\tilde{\mu}(x,0)(a,0)=\mu(x)(a)$   for  any $x\in L,~a\in A.$\\
 \item $\tilde{\mu}(x,0)(0,m),~\tilde{\mu}(0,\alpha)(a,0)\in M$   for any $x\in L,~ a\in A,~\alpha\in P,~m\in M. $\\
 \item $\tilde{\mu}(0,\alpha)(0,m)=0$ for any $\alpha\in P,~ m\in M.$
 \end{enumerate}
 \end{Def}
 
 If $(A,L)$ is a Courant pair, then the pair $(A, L)$ is a module over the Courant pair $(A,L)$, where $A$ is an $(A,A)$-bimodule by product in $A$, and $L$ is a module over itself by adjoint action. 
 
Also it is important to notice here that for each $\alpha \in P$, the map $A \longrightarrow M$ defined by sending $a$ to $\tilde{\mu}(0,\alpha)(a,0)$ is a derivation of $A$ into $M$, which yields a  map $\phi: P\rightarrow Der(A,M)$.

\subsection{Cohomology bi-complex of a Courant pair}
 Let $(M,P)$ be a module over the Courant pair $(A,L)$, then $M$ is an $(A,A)$-bimodule and $P$ is a Leibniz-$L$-module. Here we denote the tensor modules $\otimes^{p}A$ simply by $A^p$.

 Let $C^p:=C^p(A,M)=Hom(A^p,M)$, be the vector space of $p$-th Hochschild cochain of $A$ with coefficients in $M$. Note that $C^p$ becomes an $L$-module (symmetric Leibniz algebra module), where the action $[-,-]:L\times C^p\rightarrow C^p$ is given as follows:
  $$[x,f](a_1,\cdots,a_p)=[x,f(a_1,\cdots,a_p)]-\sum_{i=1}^p f(a_1,\cdots,[x,a_i],\cdots,a_p)$$
 and $[f,x]=-[x,f]$ for $f\in C^p$, $a_1, \cdots,a_p \in A$ and $x\in L $.
Here, $[x,m]=-[m,x]=\tilde{\mu}(x)(m)$ for$~a\in A,~m\in M,$ and $~x\in L $.
 
 It follows that the Lie algebra of linear derivation from $A$ to $M$ is a Leibniz algebra submodule of $C^1$ and the map $\phi: P\rightarrow Der(A,M)$ is a Leibniz algebra module homomorphism.
In other words,
 $[x,f]\in Der(A,M)$ for $x\in L$ and $f\in Der(A,M).$
 Also, $\phi[x,p]=[x,\phi(p)]$ (using the Leibniz algebra structure on the semi-direct product space $L+P$).
 
Let $\delta_H$ be the Hochschild coboundary of $A$ with coefficients in the bimodule $M$. Then it follows that $\delta_H$ is a Leibniz algebra module homomorphism, i.e. $$\delta_H([x,f])=[x,\delta_Hf].$$
for all $x\in L$ and $f\in C^p$.

Let us recall the definition of a bicomplex of the Courant pair $(A,L)$ with coefficients in the module $(M,P)$: Define $C^{p,q}(A,L;M,P):=Hom_{\mathbb{K}}(L^q,C^p(A,M))\cong Hom_{\mathbb{K}}(L^q\otimes A^p,M)$ for all  $(p,q)\in \mathbb{Z}\times \mathbb{Z}$ with $p>0$, q$\geq$0, and $C^{0,q}(A,L;M,P):=Hom_{\mathbb{K}}(L^q,P)$. Next, the vertical and horizontal maps in the bicomplex are given as follows:
\begin{itemize}
\item For $p>0$, the vertical coboundary map $C^{p,q}\rightarrow C^{p+1,q}$ is the Hochschild coboundary map $\delta_H$, given by \eqref{Hochschild Coboundary}.
\item For $p=0$ and for any $q\geq 0$, the map $\delta_v:C^{0,q}\rightarrow C^{1,q}$ is the induced map by the action of $P$ on $A$ by derivations. 
\item For $p\geq 0$ and $q\geq 0$, the horizontal map $C^{p,q}\rightarrow C^{p,q+1}$ is the Leibniz algebra coboundary $\delta_L$, given by \eqref{Leibniz coboundary}. 
\end{itemize}

Note that the composition of any two vertical maps is zero since $\delta_H\circ \delta_H=0$, and $Der(A,M)$ is a subspace of $Ker(\delta_H)$. Moreover, any vertical map is a Leibniz module homomorphism, it follows that they commute with the horizontal maps. Thus, we have a total cochain complex given as follows: for $n\geq 0$,
 $$C^n_{tot}( A,L )=\bigoplus _{p+q=n}C^{p,q}( A,L )~\mbox{and}~\delta_{tot}:C^n_{tot}( A,L )\rightarrow C^{n+1}_{tot}( A,L ),$$ whose restriction to $C^{p,q}( A,L )$ by definition is $\delta_H+(-1)^p\delta_L$. Next, define the cohomology of the Courant pair $(A,L)$ with coefficients in the module $(M,P)$ as the cohomology of this total complex. Denoted this cohomology by $H_{CP}^* (A,L; M,P)$. 
 
 In particular for $(M,P)=(A,L)$, denote the cohomology $H_{CP}^* (A,L; A,L)$ simply by $H_{CP}^*(A,L)$. In fact, this is the deformation cohomology of the Courant pair $(A,L)$.
\section{Deformation of Courant pairs}
Let $\mathbb{K}$ be a field of characteristic 0, and $R$ be a commutative algebra with identity over $\mathbb{K}$.
Let $\epsilon: R\rightarrow \mathbb{K}$ be a fixed augmentation, that is an algebra homomorphism with $\epsilon(1)=1$ and $ker(\epsilon)=\mathcal{M}$. We also assume that $dim(\mathcal{M}^k/\mathcal{M}^{k+1})<\infty $. 
\begin{Def}
A deformation $\lambda$ of a Courant pair $(A,L)$ 
 is a Courant pair structure on $(R\otimes A, R\otimes L)$ given by a triplet $\big(\cdot_{\lambda},\mu_{\lambda},[-,-]_{\lambda}\big)$, where 
\begin{itemize}
\item the map $\cdot_{\lambda}:(R\otimes A)\otimes_R (R\otimes A)\rightarrow (R\otimes A)$ gives an associative $R$-algebra structure on $R\otimes A$,
\item the bracket $[-,-]_{\lambda}:(R\otimes L)\otimes_R (R\otimes L)\rightarrow (R\otimes L)$ gives a $R$-Leibniz algebra structure on $R\otimes L$, and 
\item $R$-Leibniz algebra homomorphism $\mu_{\lambda}:R\otimes L\rightarrow Der_R(R\otimes A)$ defines an action of $R\otimes L$ on $R\otimes A$, 
\end{itemize} 
such that the pair of $R$-linear maps: 
$$(\epsilon\otimes id_A,\epsilon\otimes id_L): (R \otimes A, R\otimes L) \rightarrow (\mathbb{K}\otimes A,\mathbb{K}\otimes L)$$
is a Courant pair homomorphism.
\end{Def}

A deformation of Courant pair $(A,L)$ is called {\bf local} if 
$R$ is a local algebra over $\mathbb{K}$, and is called {\bf infinitesimal} if in addition $\mathcal{M}^2=0$.
\begin{Note}
 For $r,s \in R, ~a,b\in A, $ and $x,y\in L$ we have 
$$[r \otimes x, s\otimes y]_{\lambda}=rs[1\otimes x,1\otimes y]_{\lambda},$$
$$(r\otimes a)._{\lambda}(s\otimes b)=rs(1\otimes a)._{\lambda}(1\otimes b),$$
and $$\mu_{\lambda}(r\otimes x)(s\otimes a)=rs\mu_{\lambda}(1\otimes x)(1\otimes a).$$

Therefore, to define a deformation $\lambda$ of a Courant pair, it is sufficient to define the values of $[1\otimes x,1\otimes y]_{\lambda}, ~(1\otimes a)._{\lambda}(1\otimes b)$, and $\mu_{\lambda}(1\otimes x)(1\otimes a)$ for $a,b\in A, $ and $x,y\in L$. Since $$(\epsilon\otimes id_A,\epsilon\otimes id_L):(R \otimes A, R\otimes L) \rightarrow (\mathbb{K}\otimes A,\mathbb{K}\otimes L)$$ is a Courant pair homomorphism, let us observe the following:  
$$[1\otimes x,1\otimes y]_{\lambda}-1\otimes[x,y]\in ker(\epsilon\otimes id_L),$$
$$(1\otimes a)._{\lambda}(1\otimes b)-1\otimes a.b\in ker(\epsilon\otimes id_A) ~~\mbox{and}$$
$$\mu_{\lambda}(1\otimes x)(1\otimes a)-1\otimes \mu(x)(a)\in ker(\epsilon\otimes id_A).$$

Hence, we obtain the following equations:
\begin{align}\label{product}
(1\otimes a)._{\lambda}(1\otimes b)&=1\otimes ab+\sum_j m_j\otimes a_j,\\\label{action}
\mu_{\lambda}(1\otimes x)(1\otimes a)&=1\otimes \mu(x)(a)+\sum_k n_k\otimes b_k,\\\label{bracket}
[1\otimes x,1\otimes y]_{\lambda}&=1\otimes[x,y]+\sum_i p_i\otimes x_i,
\end{align}
where $m_j,n_k,p_i\in \mathcal{M}:=Ker(\epsilon)$, $a_j,b_k\in A$ and $x_i,\in L$ for $i,j$ and $k$ varies over a finite index set.
\end{Note}
\begin{Def}
We say two deformation $\lambda_1,$ and $\lambda_2$ are equivalent if there exists a Courant pair isomorphism
$$(\varphi_A,\varphi_L): (R\otimes A,R\otimes L)_{\lambda_1}\rightarrow  (R\otimes A,R\otimes L)_{\lambda_2}$$
such that $(\epsilon\otimes id_A)\circ \varphi_A =(\epsilon\otimes id_A)$, and $(\epsilon\otimes id_L)\circ \varphi_L =(\epsilon\otimes id_L).$
\end{Def}

\begin{Def}
Let $\lambda$ be a given deformation of Courant pair $(A,L)$ with base $(R,\mathcal{M})$ and augmentation $\epsilon: R\rightarrow \mathbb{K}$. Let $R^{\prime}$ be another commutative algebra with identity and a fixed augmentation  $\epsilon^{\prime}: R^{\prime}\rightarrow \mathbb{K}$. If $\phi : R\rightarrow R^{\prime}$ is an algebra homomorphism with $\phi(1)=1$, $\epsilon^{\prime}\circ \phi= \epsilon $, and $\mathcal{M}^{\prime}:=ker(\epsilon^{\prime})$, then the push-out $\phi_{*}\lambda$ is a deformation of $(A,L)$ with base $(A^{\prime},\mathcal{M}^{\prime})$, where
$$(r^{\prime} \otimes_R (r\otimes a))._{\phi_{*}\lambda}(s^{\prime}\otimes_R(s\otimes b))=r^{\prime}s^{\prime}\otimes_R (r\otimes a)._{\lambda}(s\otimes b);$$
$$\mu_{\phi_{*}\lambda}(r^{\prime} \otimes_R (r\otimes x)) (s^{\prime}\otimes_R (s\otimes a))=r^{\prime}s^{\prime}\otimes_R\mu_{\lambda}(r\otimes x)(s\otimes a),$$
$$[r^{\prime} \otimes_R (r\otimes x), s^{\prime}\otimes_R (s\otimes y)]_{\phi_{*}\lambda}=r^{\prime}s^{\prime}\otimes_R[r\otimes x,s\otimes y]_{\lambda},$$
for all $a,b\in A,~r,s\in R,~r^{\prime},s^{\prime}\in R^{\prime}$. Here, $R^{\prime}$ is considered as an $R$-module by the action $r^{\prime}.r=r^{\prime}\phi(r)$, so $R^{\prime}\otimes L= R^{\prime}\otimes_R (R \otimes L),$ and $R^{\prime}\otimes A= R^{\prime}\otimes_R (R \otimes A).$
\end{Def}

Note that if $\lambda:=\big(\bigcdot_{\lambda},\mu_{\lambda},[-,-]_{\lambda}\big)$ is given by equations \eqref{product}-\eqref{bracket}, then we can write the push-out $\phi_*\lambda:=\big(\bigcdot_{\phi_{*}\lambda},\mu_{\phi_{*}\lambda},[-,-]_{\phi_{*}\lambda}\big)$ as follows:
\begin{align}
(1\otimes a)\bigcdot_{\phi_{*}\lambda}(1\otimes b)&=1\otimes ab+\sum_j \phi(m_j)\otimes a_j,\\
\mu_{\phi_{*}\lambda}(1\otimes x)(1\otimes a)&=1\otimes \mu(x)(a)+\sum_k \phi(n_k)\otimes b_k,\\
[1\otimes x,1\otimes y]_{\phi_{*}\lambda}&=1\otimes[x,y]+\sum_i \phi(p_i)\otimes x_i,
\end{align}
where $m_j,n_k,p_i\in \mathcal{M}:=Ker(\epsilon)$, $a_j,b_k\in A$ and $x_i,\in L$ for $i,j$ and $k$ varies over a finite index set.

\subsection{Construction of a Universal Infinitesimal Deformation}
Let $(A,L)$ be a Courant pair, which satisfies the condition $dim(H^2_{CP}(A,L))<\infty$. Let us denote $\mathcal{H}_{CP}:=H^2_{CP}(A,L)$. Define a $\mathbb{K}$-algebra $$C_1 := \mathbb{K}\oplus\mathcal{H}_{CP}^{\prime}$$
with multiplication: $(k_1,h_1)\cdot(k_2,h_2)=(k_1k_2,k_1h_2+k_2h_1)$. Note that $\mathcal{H}_{CP}^{\prime}$ is an ideal of $C_1$ with zero multiplication. Define a $\mathbb{K}$-linear map 
$$\nu:\mathcal{H}_{CP}\rightarrow Hom(A^2,A)\oplus Hom(L,Hom(A,A))\oplus Hom(L^2,L).$$
by sending a cohomology class to a cocycle representing it. Let us observe the following isomorphisms of $\mathbb{K}$ vector spaces:
\begin{itemize}
\item $C_1\otimes A\cong A\oplus Hom(\mathcal{H}_{CP};A),$ and
\item $C_1\otimes L\cong A\oplus Hom(\mathcal{H}_{CP};L),$
\end{itemize} 
Using the above identifications, let us define the following: 
\begin{itemize}
\item A multiplication $\bigcdot_{\eta_1}:(C_1\otimes A)\times (C_1\otimes A)\rightarrow (C_1\otimes A)$ given by 
\begin{equation*}
(a,\phi_1)\bigcdot_{\eta_1}(b,\phi_2)=(a.b,\psi),
\end{equation*}
where the map $\psi:\mathcal{H}_{CP}\rightarrow A$ is defined as 
\begin{equation*}
\psi(\alpha)=\nu^1(\alpha)(a,b)+\phi_1(\alpha)\cdot b+a\cdot\phi_2(\alpha)
\end{equation*}
for any $\alpha\in \mathcal{H}_{CP}$. 

\item An action $\mu_{\eta_1}:(C_1\otimes L)\times (C_1\otimes A)\rightarrow C_1\otimes A$ given by: 
\begin{equation*}
\mu_{\eta_1}((x,\psi),(a,\phi))=(\mu(x)(a),\varphi),
\end{equation*}
where the map $\varphi:\mathcal{H}_{CP}\rightarrow A$ is defined as  
\begin{equation*}
\varphi(\alpha)=\nu^2(\alpha)(x,a)+\mu(\psi(\alpha))(a)+\mu(x)(\phi(a))
\end{equation*}
for any $\alpha\in \mathcal{H}_{CP}$. 

\item A bracket $[-,-]_{\eta_1}:(C_1\otimes L)\times (C_1\otimes L)\rightarrow(C_1\otimes L)$ given by
\begin{equation*}
[(x,\psi_1),(y,\psi_2)]_{\eta_1}=([x,y],\phi)
\end{equation*}
where the map $\phi:\mathcal{H}_{CP}\rightarrow L$ is defined as  
\begin{equation*}
\phi(\alpha)=\nu^3(\alpha)(x,y)+[\psi_1(\alpha),y]+[x,\psi_2(\alpha)]
\end{equation*}
for any $\alpha\in \mathcal{H}_{CP}$. 
\end{itemize}

By using the condition $\delta_{tot}(\nu(\alpha))=0$, it follows that the triplet $\eta_1:=\big(~\bigcdot_{\eta_1},\mu_{\eta_1},[-,-]_{\eta_1}\big)$ gives a Courant pair structure on $(C_1\otimes A,C_1\otimes L)$ $($over the algebra $C_1)$. Therefore, we have an infinitesimal deformation $\eta_1$ of the Courant pair $(A,L)$ with base $(C_1,\mathcal{H}_{CP})$.
\begin{Rem}
Let $\tilde{\nu}$ be another map associating a cohomology class in $\mathcal{H}_{CP}$ to a cocycle representing this class. Then the cocycles $\nu(\alpha)$ and $\tilde{\nu}(\alpha)$ in $C^2_{tot}(A,L)$ represent the same class $\alpha\in \mathcal{H}_{CP}$. Let $\{h_i\}_{1\leq i\leq n}$ be a basis of $\mathcal{H}_{CP}$, then define a $K$-linear map
$$\gamma: \mathcal{H}_{CP}\rightarrow C^1_{tot}(A,L)=Hom(A,A)\oplus Hom(L,L)$$
by $\gamma(h_i)=(\gamma^1(h_i),\gamma^2(h_i))=(\gamma^1_i,\gamma^2_i)$, where $\gamma^1_i\in Hom(A,A)$, and $\gamma^2_i\in Hom(L,L)$ such that 
\begin{equation}\label{rel1}
\delta_{tot}(\gamma^1_i,\gamma^2_i)=\nu(h_i)-\tilde{\nu}(h_i)=(\nu^1(h_i)-\tilde{\nu}^1(h_i),\nu^2(h_i)-\tilde{\nu}^2(h_i),\nu^3(h_i)-\tilde{\nu}^3(h_i)),
\end{equation} 
i.e. $\delta_{tot}(\gamma)=\nu-\tilde{\nu}.$ Next, let us define the following maps:
\begin{enumerate}
\item $\Phi_A: C_1\otimes A\rightarrow C_1\otimes A$ by $\Phi_A(a,\theta)=(a,\eta)$, where $\eta(\alpha)=\theta(\alpha)+\gamma^1(\alpha)(a)$ for any $\alpha\in \mathcal{H}_{CP}$, and 
\item $\Phi_L: C_1\otimes L\rightarrow C_1\otimes L$ by $\Phi_L(l,\phi)=(a,\psi)$, where $\psi(\alpha)=\phi(\alpha)+\gamma^2(\alpha)(l)$ for any $\alpha\in \mathcal{H}_{CP}$.
\end{enumerate}

Next, by using equation \eqref{rel1}, it follows that the map $(\Phi_A,\Phi_L):(C_1\otimes A,C_1\otimes L)\rightarrow (C_1 \otimes A,C_1\otimes L)$ is a $C_1$-linear automorphism of the Courant pair $(C_1\otimes A,C_1\otimes L)$. Therefore, the above infinitesimal deformation is independent of the choice of the map $\nu$. 
\end{Rem}
 Let $\lambda$ be an infinitesimal deformation of A Courant pair $(A,L)$ with a finite dimensional base $(R,\mathcal{M})$. Let $\{x_i\}_{1\leq i\leq n}$ be a basis of $\mathcal{M}=ker(\epsilon)$ and $\{\chi_i\}_{1\leq i\leq n}$ be the dual basis. Also, any element $\xi\in \mathcal{M}^
{\prime}$ can be viewed as an element in the dual space $R^{\prime}$ with $\xi(1)=0$. For any such $\xi$ set 
\begin{align*}
\alpha^1_{\lambda,\xi}(a,b)&=\xi\otimes id_A\big((1\otimes a)._\lambda)(1\otimes b)\big),\\
\alpha^2_{\lambda,\xi}(x,a)&=\xi\otimes id_A(\mu_{\lambda}(1\otimes x)(1\otimes a)),\\
\alpha^3_{\lambda,\xi}(x,y)&=\xi\otimes id_L([1\otimes x,1\otimes y]_{\lambda}).
\end{align*}
The triplet $\alpha_{\lambda,\xi}:=(\alpha^1_{\lambda,\xi},\alpha^2_{\lambda,\xi},\alpha^3_{\lambda,\xi})$ is a $2$-cochain in $C^2_{tot}(A,A).$ Moreover, by using equations \eqref{product}-\eqref{bracket} we can write down the Leibniz bracket $[-,-]_{\lambda}$ on $R\otimes L$, associative product $\bigcdot_{\lambda}$ on $R\otimes A$, and the action $\mu_{\lambda}$ of $R\otimes L$ on $R\otimes A$ in terms of the basis of $\mathcal{M}$, as follows:
\begin{align}\label{asso}
(1\otimes a)\bigcdot_{\lambda}(1\otimes b)&=1\otimes ab+\sum_i m_i\otimes \alpha^1_{\lambda,\xi_i}(a,b),\\\label{act}
\mu_{\lambda}(1\otimes x)(1\otimes a)&=1\otimes\mu(x)(a)+\sum_i m_i\otimes \alpha^2_{\lambda,\xi_i}(x,a),\\\label{brac}
[1\otimes x,1\otimes y]_{\lambda}&=1\otimes[x,y]+\sum_i m_i\otimes \alpha^3_{\lambda,\xi_i}(x,y).
\end{align}

\begin{Lem}\label{cocycle}
The cochain $\alpha_{\lambda,\xi}$ is a cocycle.
\end{Lem}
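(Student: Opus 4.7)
The plan is to decompose the total coboundary according to bidegree and show that each of the four resulting component equations in $C^3_{tot}(A,L) = C^{3,0} \oplus C^{2,1} \oplus C^{1,2} \oplus C^{0,3}$ follows from one of the four axioms satisfied by the deformation $\lambda$. Since $\alpha^1_{\lambda,\xi} \in C^{2,0}$, $\alpha^2_{\lambda,\xi} \in C^{1,1}$, and $\alpha^3_{\lambda,\xi} \in C^{0,2}$, the identity $\delta_{tot}(\alpha_{\lambda,\xi})=0$ unpacks, using $\delta_{tot}|_{C^{p,q}} = \delta_H + (-1)^p\delta_L$ (with $\delta_H$ replaced by $\delta_v$ when $p=0$), into the four equations $\delta_H\alpha^1_{\lambda,\xi}=0$, $\delta_L\alpha^1_{\lambda,\xi} + \delta_H\alpha^2_{\lambda,\xi} = 0$, $-\delta_L\alpha^2_{\lambda,\xi} + \delta_v\alpha^3_{\lambda,\xi} = 0$, and $\delta_L\alpha^3_{\lambda,\xi}=0$.

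The uniform strategy for each component is: start from the corresponding Courant pair axiom on $(R\otimes A, R\otimes L)_\lambda$, substitute the expansions \eqref{asso}--\eqref{brac}, expand the resulting expressions, and discard all terms in $\mathcal{M}^2 \otimes (\cdot)$ using the hypothesis $\mathcal{M}^2 = 0$; then apply $\xi \otimes id$. The zeroth-order terms cancel because $(A,L)$ itself is a Courant pair, and the first-order (linear in $\mathcal{M}/\mathcal{M}^2$) terms deliver the required cocycle equation after pairing with $\xi$. Concretely, associativity of $\bigcdot_\lambda$ applied to three inputs yields $\delta_H\alpha^1_{\lambda,\xi}=0$, while the Leibniz identity for $[-,-]_\lambda$ applied to three inputs yields $\delta_L\alpha^3_{\lambda,\xi}=0$.

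For the two mixed components, the derivation/Leibniz-rule compatibility $\mu_\lambda(1\otimes x)\bigl((1\otimes a)\bigcdot_\lambda (1\otimes b)\bigr) = \mu_\lambda(1\otimes x)(1\otimes a)\bigcdot_\lambda (1\otimes b) + (1\otimes a)\bigcdot_\lambda \mu_\lambda(1\otimes x)(1\otimes b)$ produces, after the same reduction, precisely the equation $\delta_L\alpha^1_{\lambda,\xi} + \delta_H\alpha^2_{\lambda,\xi} = 0$ (recalling that the action of $L$ on $Hom(A^2,A)$ introduced in Section~2.3 is exactly what appears when one differentiates the product perturbation $\alpha^1$ by $\mu(x)$ and vice-versa). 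Similarly, the Leibniz algebra homomorphism property $\mu_\lambda([1\otimes x,1\otimes y]_\lambda) = [\mu_\lambda(1\otimes x),\mu_\lambda(1\otimes y)]_c$, applied to a test element $1\otimes a$, produces the equation $-\delta_L\alpha^2_{\lambda,\xi} + \delta_v\alpha^3_{\lambda,\xi} = 0$, where $\delta_v$ is realized by the canonical map $L \to Der(A)$ induced by $\mu$.

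The main obstacle is not conceptual but notational: one must carefully match signs and indexing between the expansions produced by the four axioms and the sign convention $(-1)^p\delta_L$ in $\delta_{tot}$, together with the definitions of $\delta_H$ and $\delta_L$ as module-valued coboundaries recalled in Section~2.3 (in particular the action of $L$ on Hochschild cochains by $[x,f](a_1,\dots,a_p)=[x,f(a_1,\dots,a_p)] - \sum_i f(a_1,\dots,[x,a_i],\dots,a_p)$). Once the dictionary between the expanded Courant pair identities and the bicomplex differentials is fixed, the verification in each bidegree is immediate, so the lemma follows.
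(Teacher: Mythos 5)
Your proposal is correct and follows the same route as the paper: the paper's proof is a one-line assertion that substituting the expansions \eqref{asso}--\eqref{brac} into the Courant pair axioms for $(R\otimes A,R\otimes L)_\lambda$ and using $\mathcal{M}^2=0$ yields exactly the four bidegree-component equations $\delta_H\alpha^1_{\lambda,\xi}=0$, $\delta_H\alpha^2_{\lambda,\xi}+\delta_L\alpha^1_{\lambda,\xi}=0$, $\delta_v\alpha^3_{\lambda,\xi}-\delta_L\alpha^2_{\lambda,\xi}=0$, and $\delta_L\alpha^3_{\lambda,\xi}=0$, which is precisely your decomposition. You merely supply the bookkeeping (which axiom produces which component, and why the zeroth-order terms cancel) that the paper leaves implicit.
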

\begin{proof}
By using equations \eqref{asso}-\eqref{brac}, and using the fact that $(\bigcdot_{\lambda},\mu_{\lambda},[-,-]_{\lambda})$ is a Courant pair structure on the pair $(R\otimes A,R\otimes L)$, we get $\delta_{tot}(\alpha_{\lambda,\xi})=0$, i.e. we have the following equations: $\delta_H \alpha^1_{\lambda,\xi}=0$; $\delta_H \alpha^2_{\lambda,\xi}+\delta_L\alpha^1_{\lambda,\xi}=0$; $ \alpha^3_{\lambda,\xi}-\delta_L\alpha^2_{\lambda,\xi}=0$; and $
\delta_L \alpha^3_{\lambda,\xi}=0$.
\end{proof}

\begin{Thm}
If $\lambda_1$ and $\lambda_2$ are infinitesimal deformations of the Courant pair $(A,L)$ with the base $(R,\mathcal{M})$, then both of them are equivalent if and only if $\alpha_{\lambda_1,\xi}$ and $\alpha_{\lambda_2,\xi}$ represent the same cohomology class for $\xi\in \mathcal{M}^{\prime}$.
\end{Thm}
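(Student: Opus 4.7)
The plan is to prove both directions by exploiting the infinitesimal condition $\mathcal{M}^2=0$, which allows us to write every automorphism of the pair $(R\otimes A,R\otimes L)$ that reduces to the identity modulo $\mathcal{M}$ as a sum of the identity plus a linear first-order correction, and similarly makes the cocycle equations for $\alpha_{\lambda_1,\xi}-\alpha_{\lambda_2,\xi}$ close up without higher-order tails. I will fix a basis $\{x_i\}$ of $\mathcal{M}$ with dual basis $\{\xi_i\}$ and reformulate everything in terms of the three bilinear components $\alpha^1_{\lambda,\xi_i}$, $\alpha^2_{\lambda,\xi_i}$, $\alpha^3_{\lambda,\xi_i}$ as in equations \eqref{asso}--\eqref{brac}, since by linearity in $\xi\in\mathcal{M}'$ it suffices to check the cohomology-class statement on the basis $\{\xi_i\}$.

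For the forward direction, suppose $(\varphi_A,\varphi_L)\colon (R\otimes A,R\otimes L)_{\lambda_1}\to (R\otimes A,R\otimes L)_{\lambda_2}$ is an equivalence. The compatibility with the augmentations forces $\varphi_A(1\otimes a)=1\otimes a+\sum_i x_i\otimes\gamma^1_i(a)$ and $\varphi_L(1\otimes y)=1\otimes y+\sum_i x_i\otimes\gamma^2_i(y)$ for unique linear maps $\gamma^1_i\colon A\to A$ and $\gamma^2_i\colon L\to L$. Substituting these expressions into the three identities $\varphi_A\circ\bigcdot_{\lambda_1}=\bigcdot_{\lambda_2}\circ(\varphi_A\otimes\varphi_A)$, $\varphi_A\circ\mu_{\lambda_1}=\mu_{\lambda_2}\circ(\varphi_L\otimes\varphi_A)$, and $\varphi_L\circ[-,-]_{\lambda_1}=[-,-]_{\lambda_2}\circ(\varphi_L\otimes\varphi_L)$, and collecting the coefficient of $x_i$ (the quadratic terms vanish by $\mathcal{M}^2=0$), one reads off precisely
\[
\alpha^1_{\lambda_1,\xi_i}-\alpha^1_{\lambda_2,\xi_i}=\delta_H(\gamma^1_i),\quad
\alpha^3_{\lambda_1,\xi_i}-\alpha^3_{\lambda_2,\xi_i}=\delta_L(\gamma^2_i),
\]
together with the mixed relation expressing $\alpha^2_{\lambda_1,\xi_i}-\alpha^2_{\lambda_2,\xi_i}$ as the $(1,1)$-component of $\delta_{tot}(\gamma^1_i,\gamma^2_i)$, coming from the horizontal Leibniz coboundary of $\gamma^1_i\in C^{1,0}$ combined with the vertical map applied to $\gamma^2_i\in C^{0,1}$. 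Thus $\alpha_{\lambda_1,\xi_i}-\alpha_{\lambda_2,\xi_i}=\delta_{tot}(\gamma^1_i,\gamma^2_i)$, and the two cocycles are cohomologous.

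The reverse direction simply reverses this construction: given $(\gamma^1_i,\gamma^2_i)\in C^1_{tot}(A,L)$ with $\delta_{tot}(\gamma^1_i,\gamma^2_i)=\alpha_{\lambda_1,\xi_i}-\alpha_{\lambda_2,\xi_i}$, define $\varphi_A$ and $\varphi_L$ by the formulas above (extended $R$-linearly), and verify that the three compatibility identities hold on generators; once again $\mathcal{M}^2=0$ kills every term of order $\geq 2$ in the $x_i$'s, so the verification reduces to exactly the coboundary relations one started from. The maps are automorphisms because they are of the form identity plus a nilpotent correction, hence invertible by a geometric series that terminates after one step. The main technical obstacle is bookkeeping: one must match the sign convention $\delta_{tot}|_{C^{p,q}}=\delta_H+(-1)^p\delta_L$ of the total complex against the three structural identities, and in particular check that the mixed equation governing $\alpha^2$ recovers exactly the $(1,1)$-component of $\delta_{tot}(\gamma^1_i,\gamma^2_i)$, which involves both the horizontal Leibniz action of $L$ on $\mathrm{Hom}(A,A)$ and the vertical map $\delta_v$ induced by the $L$-action on $A$ by derivations; once the signs and the two contributions are correctly aligned, everything else is formal.
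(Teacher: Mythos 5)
Your proposal is correct and follows essentially the same route as the paper: write the equivalence as the identity plus a first-order correction $\sum_i x_i\otimes(\gamma^1_i,\gamma^2_i)$ determined by the augmentation condition, substitute into the three structure identities, use $\mathcal{M}^2=0$ to kill higher-order terms, and read off that $(\varphi_A,\varphi_L)$ is an isomorphism if and only if $\delta_{tot}(\gamma^1_i,\gamma^2_i)=\alpha_{\lambda_1,\xi_i}-\alpha_{\lambda_2,\xi_i}$. The paper states this equivalence as a single ``if and only if'' after a straightforward calculation, whereas you separate the two directions and additionally record the invertibility of identity-plus-nilpotent; both are the same argument.
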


\begin{proof}
Let $\lambda_1$ and $\lambda_2$ are equivalent infinitesimal deformations of the Courant pair $(A,L)$ with the base $(R,\mathcal{M})$, which implies there exists an $R$-linear isomorphism of Courant pairs, given by
$$(\Phi_A,\Phi_L):(R\otimes A,R\otimes L)_{\lambda_1}\rightarrow(R\otimes A,R\otimes L)_{\lambda_2}.$$
such that $\big((\epsilon\otimes id_A)\circ \Phi_A,(\epsilon\otimes id_L)\circ \Phi_L\big)=(\epsilon\otimes id_A,\epsilon\otimes id_L)$, where $\epsilon:R\rightarrow \mathbb{K}$ is an augmentation such that $Ker(\epsilon)=\mathcal{M}$. Since $R=\mathbb{K}\oplus \mathcal{M}$, we get $(R\otimes A,R\otimes L)=(A\oplus (\mathcal{M}\otimes A),L\oplus (\mathcal{M}\otimes L))$. Let $\{x_i\}_{1\leq i\leq n}$ be a basis of $\mathcal{M}$ and $\{\chi_i\}_{1\leq i\leq n}$ be the dual basis of $\{x_i\}_{1\leq i\leq n}$.

Now, maps $\Phi_A$, and $\Phi_L$ are $R$-linear maps, i.e. $\Phi_A$ is identified with values: $\Phi_A(1\otimes a)$ for $a\in A$. Similarly, $\Phi_L$ is identified with values: $\Phi_L(1\otimes l)$ for $l\in L$. Therefore, the isomorphism $(\Phi_A,\Phi_L)$ is of the form: $(id_A+\Phi_A^2,id_L+\Phi_L^2)$, such that 
$$\Phi_L(1\otimes l)=1\otimes l+\sum_{i=1}^n x_i\otimes \phi^L_i(l),$$
and 
$$\Phi_A(1\otimes a)=1\otimes a+\sum_{i=j}^n x_j\otimes \phi^A_j(a),$$ 
where we identify $\Phi_A^2\in Hom(A,\mathcal{M}\otimes A)$ with $\sum_{i=1}^n x_i\otimes \phi^A_i\in \mathcal{M}\otimes Hom(L,L)$ and similarly $\Phi_L^2\in Hom(L,\mathcal{M}\otimes L)$ with $\sum_{i=1}^n x_i\otimes \phi^L_i\in \mathcal{M}\otimes Hom(L,L)$.

 Then similar to equations \eqref{asso}-\eqref{brac}, the triplets $(~\bigcdot_{\lambda_k},\mu_{\lambda_k},[-,-]_{\lambda_k})$ can be expressed in the form of the basis $\{x_i\}_{1\leq i\leq n}$ and the set $\{\alpha_{\lambda_k,\chi_i}:1\leq i\leq n\}$. Next, by a straightforward calculation it follows that $(\Phi_A,\Phi_L)$ is an isomorphism of Courant pairs if and only if $\delta_{tot}(\phi^A_i,\phi^L_i)=\alpha_{\lambda_1,\chi_i}-\alpha_{\lambda_2,\chi_i}$.
\end{proof}

\begin{Thm}
For any infinitesimal deformation $\lambda$ of a Courant pair $(A,L)$ with a finite base $(R,\mathcal{M})$, there exists a unique homomorphism $\phi:C_1\rightarrow R$ such that $\lambda$ is equivalent to the push-out $\phi_{*}\eta_1$. 
\end{Thm}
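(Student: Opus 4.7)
The plan is to read off a classifying homomorphism $\phi$ from the cohomological invariants of $\lambda$ (packaged by Lemma \ref{cocycle}), then use the preceding equivalence theorem to verify both existence and uniqueness. Throughout, the assumption that the base is finite means $\dim(\mathcal{M})<\infty$, which together with $\dim(\mathcal{H}_{CP})<\infty$ makes every dualization below work.

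Step 1: construct $\phi$. Since $\mathcal{M}^2=0$, Lemma \ref{cocycle} furnishes, for each $\xi\in \mathcal{M}^{\prime}$, a cocycle $\alpha_{\lambda,\xi}\in C^2_{tot}(A,L)$. The assignment $\xi\mapsto [\alpha_{\lambda,\xi}]$ is $\mathbb{K}$-linear, defining $\Psi_{\lambda}:\mathcal{M}^{\prime}\to\mathcal{H}_{CP}$. Dualizing and using the canonical identification $\mathcal{M}^{\prime\prime}\cong\mathcal{M}$ (valid because $\dim\mathcal{M}<\infty$), one obtains $\Psi_{\lambda}^{*}:\mathcal{H}_{CP}^{\prime}\to\mathcal{M}$. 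Define
\[
\phi:C_1=\mathbb{K}\oplus \mathcal{H}_{CP}^{\prime}\longrightarrow R=\mathbb{K}\oplus \mathcal{M},\qquad \phi(k,h^{\prime})=\bigl(k,\Psi_{\lambda}^{*}(h^{\prime})\bigr).
\]
Because $\mathcal{H}_{CP}^{\prime}$ squares to zero in $C_1$ and $\mathcal{M}^{2}=0$ in $R$, the computation $(k_1,h_1^{\prime})\cdot (k_2,h_2^{\prime})=(k_1k_2,k_1h_2^{\prime}+k_2h_1^{\prime})$ shows $\phi$ is an algebra homomorphism with $\phi(1)=1$, and $\epsilon^{\prime}\circ\phi=\epsilon$ since $\phi(\mathcal{H}_{CP}^{\prime})\subseteq \mathcal{M}$.

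Step 2: verify $\lambda$ is equivalent to $\phi_{*}\eta_1$. By the preceding theorem it suffices to show $[\alpha_{\phi_{*}\eta_1,\xi}]=[\alpha_{\lambda,\xi}]$ in $\mathcal{H}_{CP}$ for every $\xi\in\mathcal{M}^{\prime}$. Using the push-out formulas and the explicit expressions defining $\eta_1$ in terms of the representative map $\nu$, a direct computation gives
\[
\alpha_{\phi_{*}\eta_1,\xi}^{\,j}=\sum_{i} (\xi\circ\phi)(\beta_i)\,\nu^{j}(\beta_i^{*})\qquad (j=1,2,3),
\]
where $\{\beta_i\}$ is a basis of $\mathcal{H}_{CP}^{\prime}$ and $\{\beta_i^{*}\}$ the dual basis of $\mathcal{H}_{CP}$. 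The functional $\xi\circ\phi$ vanishes on $\mathbb{K}\subset C_1$ and equals $\xi\circ\Psi_{\lambda}^{*}$ on $\mathcal{H}_{CP}^{\prime}$; by construction the latter, paired via $\{\beta_i,\beta_i^{*}\}$, reproduces exactly the cohomology class $\Psi_{\lambda}(\xi)=[\alpha_{\lambda,\xi}]$. Hence $[\alpha_{\phi_{*}\eta_1,\xi}]=[\alpha_{\lambda,\xi}]$ in $\mathcal{H}_{CP}$, as required.

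Step 3: uniqueness. Suppose $\phi^{\prime}:C_1\to R$ is another augmentation-preserving algebra homomorphism with $\lambda\sim\phi^{\prime}_{*}\eta_1$. Transitivity yields $\phi_{*}\eta_1\sim\phi^{\prime}_{*}\eta_1$, and the preceding theorem then gives $[\alpha_{\phi_{*}\eta_1,\xi}]=[\alpha_{\phi^{\prime}_{*}\eta_1,\xi}]$ for every $\xi\in\mathcal{M}^{\prime}$. By the computation of Step 2, this equality of classes is exactly $\xi\circ(\phi-\phi^{\prime})=0$ on $\mathcal{H}_{CP}^{\prime}$ for all $\xi\in\mathcal{M}^{\prime}$; as $\mathcal{M}^{\prime}$ separates points of $\mathcal{M}$, one concludes $(\phi-\phi^{\prime})|_{\mathcal{H}_{CP}^{\prime}}=0$. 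Combined with $\phi(1)=\phi^{\prime}(1)=1$, this forces $\phi=\phi^{\prime}$.

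The main obstacle is the cocycle-level computation in Step 2: the identity $[\alpha_{\phi_{*}\eta_1,\xi}]=[\alpha_{\lambda,\xi}]$ only holds in cohomology, not on the nose, and depends on the choice of the representative map $\nu$. This is precisely where the preceding remark — that the construction of $\eta_1$ is independent of $\nu$ up to $C_1$-linear isomorphism of Courant pairs — is indispensable: it licenses one to work with the (possibly non-canonical) cocycle representatives and still obtain a well-defined, unique classifying homomorphism.
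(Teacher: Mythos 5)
Your proposal is correct and follows essentially the same route as the paper: define $\phi=\mathrm{id}\oplus a_{\lambda}^{\prime}$ where $a_{\lambda}^{\prime}$ is the dual of $\xi\mapsto[\alpha_{\lambda,\xi}]$, compute that $\alpha_{\phi_{*}\eta_1,\xi}=\nu(a_{\lambda}(\xi))$ so the classes agree, invoke the preceding equivalence theorem, and read uniqueness off the same computation. (One small quibble: the class equality in Step 2 holds for any fixed choice of $\nu$, since $\nu(\alpha)$ represents $\alpha$ by definition, so the appeal to the independence-of-$\nu$ remark in your closing paragraph is not actually needed.)
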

\begin{proof}
Let $\lambda$ be an infinitesimal deformation of a Courant pair $(A,L)$ with the base $(R,\mathcal{M})$, where $R$ is finite dimensional local algebra over $\mathbb{K}$. 
Let                                                                                                                                                                                                                                                                                                                                                                                                                                                                                                                                                                                                                                                                                                                                                                                                                                                                                                                                                                                                                                                                                                                                                                                                                                                                                                                                                                                                                                                                                                                                                                                                                                                                                                                                                                                                                                                                                                                                                                                                    $\{x_i\}_{1\leq i\leq n}$ be a basis of $\mathcal{M}$ and $\{\chi_i\}_{1\leq i\leq n}$ be the corresponding dual basis of $\mathcal{M}^{\prime}.$ From Lemma \ref{cocycle}, we define a linear map $$\alpha_\lambda : \mathcal{M}^{\prime} \rightarrow C^2_{tot}(A,L)$$
sending $\xi\mapsto \alpha_{\lambda,\xi}$ for any $\xi\in \mathcal{M}^{\prime}$ and $\delta_{tot}\circ \alpha_\lambda=0$. For $\chi_i\in \mathcal{M}^{\prime}$, let $a_{\lambda, \chi_i}\in \mathcal{H}_{CP}$ be the cohomology class of the cocycle $\alpha_{\lambda, \chi_i}.$ Then, we get a linear map $a_\lambda : \mathcal{M}^{\prime}\rightarrow \mathcal{H}_{CP}$ given by $\xi\mapsto [\alpha_{\lambda,\xi}]$.

Let $a_{\lambda}^{\prime}: \mathcal{H}_{CP}^{\prime}\rightarrow \mathcal{M}$ be the dual map of the map $a_{\lambda}:\mathcal{M}^{\prime}\rightarrow \mathcal{H}_{CP}$, then define 
$$\phi:=(id\oplus a_{\lambda}^{\prime}):C_1\rightarrow  R=\mathbb{K}\oplus\mathcal{M}.$$ Also assume that $\{h_i\}_{1\leq i\leq m}$ is a basis of $\mathcal{H}_{CP}$ and $\{g_i\}_{1\leq i\leq m}$ is the corresponding dual basis of $\mathcal{H}_{CP}^{\prime}$. Then we can write the triplet $(~\bigcdot_{\phi_{*}\beta},\mu_{\phi_{*}\beta},[-,-]_{\phi_{*}\beta})$ in terms of this basis of $\mathcal{H}_{CP}$ as follows:

\begin{align*}
(1\otimes a)\bigcdot_{\phi_{*}\beta}(1\otimes b)&=1\otimes ab+\sum_i \phi(g_i)\otimes \nu^1(h_i)(a,b),\\
\mu_{\phi_{*}\beta}(1\otimes x)(1\otimes a) &=1\otimes\mu(x)(a)+\sum_i \phi(g_i)\otimes \nu^2(h_i)(x,a),\\
[1\otimes x,1\otimes y]_{\phi_{*}\beta}&=1\otimes[x,y]+\sum_i \phi(g_i)\otimes \nu^3(h_i)(x,y).
\end{align*}

Any basis element $\chi_i\in \mathcal{M}^{\prime}$ can be extended to an element of $R^{\prime}$ such that $\chi_i(1)=0$. Also, recall the map $$\nu: \mathcal{H}_{CP}\rightarrow Hom(A^2,A)\oplus Hom(L,Hom(A,A))\oplus Hom(L^2,L)$$ given by $\nu(\alpha)=(\nu^1(\alpha),\nu^2(\alpha),\nu^3(\alpha))$ for any $\alpha\in \mathcal{H}_{CP}$. Then 

\begin{align*}
\alpha_{\phi_*\eta_1,\chi_i}^2(x,a)
&=(\chi_i\otimes id)(\mu_{\phi_{*}\eta_1}(1\otimes x)(1\otimes a))\\
&=(\chi_i\otimes id)\big(1\otimes \mu(x)(a)+\sum_k \phi(g_k)\otimes \nu^2(h_k)(a,b)\big)\\
&=(\chi_i\otimes id)(\sum_k a_{\lambda}^{\prime}(g_k)\otimes \nu^2(h_k)(x,a))\\
&=\sum_k \chi_i(a_{\lambda}^{\prime}(g_k))\otimes \nu^2(h_k)(x,a)\\
&=\sum_k g_k(a_{\lambda}(\chi_i))\otimes \nu^2(h_k)(x,a)\\
&=\nu^2\Big(\sum_k g_k(a_{\lambda}(\chi_i))h_k\Big)(x,a)=\nu^2\circ a_{\lambda}(\chi_i)(x,a).
\end{align*}

Similarly, $$\alpha_{\phi_*\eta_1,\chi_i}^1(a,b)=\nu^1\circ a_{\lambda}(\chi_i)(a,b),$$ 

and $$\alpha_{\phi_*\eta_1,\chi_i}^3(a,b)=\nu^3\circ a_{\lambda}(\chi_i)(x,y).$$ Thus, $\alpha_{\phi_*\eta_1,\chi_i}=\mu\circ a_{\lambda}(\chi_i)$, i.e for any $\xi\in \mathcal{M}^{\prime}$, $2$-cocycles $\alpha_{\phi_*\eta_1,\xi}$ and $\alpha_{\lambda,\xi}$ represent the same cohomology class in $\mathcal{H}_{CP}$. Therefore, the infinitesimal deformation $\lambda$ of the Courant pair $(A,L)$ is equivalent to the push-out $\phi_*\eta_1$. 

Next, we need to show that the map $\phi$ is uniquely defined. Let $\psi:C_1\rightarrow R$ be an arbitrary $\mathbb{K}$-linear map such that $\psi(1)=1$ and $\epsilon\circ \psi$ is the canonical augmentation in the $\mathbb{K}$-algebra $C_1$. Let $\psi_*\eta_1$ be equivalent to the infinitesimal deformation $\lambda$ of $L$ with base $(R,\mathcal{M})$, then $a_{\psi_*\eta_1}=a_{\lambda}$ and we get the following equations:
\begin{align*}
(1\otimes a)._{\psi_{*}\eta_1}(1\otimes b)
&=1\otimes ab+\sum_j x_j\otimes (\sum_i\chi_j(\psi(g_i))\nu^1(h_i))(a,b)\\
\mu_{\psi^*\eta_1}(1\otimes x)(1\otimes a)&=1\otimes \mu(x)(a)+\sum_j x_j\otimes (\sum_i\chi_j(\psi(g_i))\nu^2(h_i))(x,a)\\
 [1\otimes x,1\otimes y]_{\psi_{*}\eta_1}&=1\otimes[x,y]+\sum_j x_j\otimes (\sum_i\chi_j(\psi(g_i))\nu^3(h_i))(x,y).
 \end{align*}

Therefore, $\alpha_{\psi_*\eta_1}(\chi_j)=\nu(\sum_i\chi_j(\psi(g_i))h_i)$, which implies that the cohomology class represented by $\alpha_{\psi_*\eta_1}(\chi_j)$ is given by
$$a_{\psi_*\eta_1}(\chi_j)=\sum_i\chi_j(\psi(g_i))h_i.$$
Since, $a_{\psi_*\eta_1}=a_{\lambda}$ and $a_{\lambda}(\chi_j)=\sum_i\chi_j(a_{\lambda}^{\prime}(g_i))h_i$, we get $\psi(g_i)=a_{\lambda}^{\prime}(g_i)$ for $1\leq i\leq m$, which implies $\psi=\phi$.
\end{proof}

\begin{Def}
Let $(R,\mathcal{M})$ be a complete local algebra. A formal deformation of the Courant pair $(A,L)$ with base $R$ is a Courant pair structure on the pair $$\big(R\otimes A, R\otimes L\big)
:=\big(lim_{n\rightarrow \infty}((R/\mathcal{M}^n)\otimes A), lim_{n\rightarrow \infty}((R/\mathcal{M}^n)\otimes L)\big)$$
which is the projective limit of deformations of $(A,L)$ with base $(R/\mathcal{M}^n,\mathcal{M}^{n+1} )$, such that the following map:
$$\big(\epsilon\otimes id_A,\epsilon\otimes id_L\big):\big(R\otimes A, R\otimes L\big)\rightarrow (\mathbb{K}\otimes A,\mathbb{K}\otimes L)$$
is a homomorphism of Courant pairs over $R$. 
\end{Def}

\begin{Def}
A formal deformation $\eta$ of a Courant pair $(A,L)$ with base $C$ is called a versal deformation if 
\begin{itemize}
\item For any formal deformation $\lambda$ of $(A,L)$ with base $R$, then there exist an algebra homomorphism $\phi:C\rightarrow R$ such that $\lambda\cong \phi_*\eta$.
\item Let $\mathcal{M}$ be the maximal ideal in $R$ and $\mathcal{M}^2=0$, then $\phi$ is unique.
\end{itemize}
\end{Def}

\subsection{Extension of a deformation of Courant pair} 
Let $\lambda:=(~\bigcdot_{\lambda},\mu_{\lambda},[-,-]_{\lambda})$ be a deformation of Courant pair $(A,L)$ with a finite dimension local algebra $(R,\mathcal{M})$. 

From Theorem \ref{CorresHarr}, $H^2_{Harr}(R,\mathbb{K})$ corresponds bijectively to the set of isomorphism classes of extensions of $R$ by $\mathbb{K}$. Let $[\psi]\in H^2_{Harr}(R,\mathbb{K})$ corresponds to the equivalence class of 1-dimensional extension of $R$ represented by 
$$0\rightarrow \mathbb{K}\xrightarrow{i}S\xrightarrow{p}R.$$
Let us define the following maps:
\begin{itemize}
\item $(I_L,I_A):=(i\otimes id_L,i\otimes id_A):(\mathbb{K}\otimes L,\mathbb{K}\otimes A)\rightarrow (S\otimes L,S\otimes A)$,
\item $(P_L,P_A):=(p\otimes id_L,p\otimes id_A):(S\otimes L,S\otimes A)\rightarrow (R\otimes L,R\otimes A)$, and
\item $(E_L,E_A):=(\epsilon_S\otimes id_L,\epsilon_S\otimes id_A): (S\otimes L,S\otimes A)\rightarrow (\mathbb{K}\otimes L,\mathbb{K}\otimes A).$
\end{itemize}
Here, $\epsilon_S=\epsilon\circ p$ is an augmenation of $S$. Fix a section of the $\mathbb{K}$-linear map $p:S\rightarrow R$, then $S\cong R\oplus \mathbb{K}$ and the isomorphism is given by $s\mapsto (p(s),i^{-1}(s-q(p(s))))$. Denote the inverse image of $(r,k)\in R\oplus \mathbb{K}$ simply by $(r,k)_q$. Moreover, 
the algebra multiplication in $S$ is given by:
$$(r_1,k_1)_q(r_2,k_2)_q=(r_1r_2, \epsilon(r_1)k_2+\epsilon(r_2)k_1+\psi(r_1,r_2))$$ 

Let $\{m_i:1\leq i\leq n \}$ be a basis of the maximal ideal $\mathcal{M}_R$ in $R$ and $\{\xi_i:1\leq i\leq n \}$ be the dual basis of $\mathcal{M}_R^{\prime}$. Then by equations \eqref{asso}-\eqref{brac}, the deformation $\lambda=(\bigcdot_{\lambda},\mu_{\lambda},[-,-]_{\lambda})$ is given as follows:
\begin{align*}
(1\otimes a)\bigcdot_{\lambda}(1\otimes b)&=1\otimes ab+\sum_{i=1}^{n} m_i\otimes \alpha^1_{\lambda,\xi_i}(a,b),\\
\mu_{\lambda}(1\otimes x)(1\otimes a)&=1\otimes\mu(x)(a)+\sum_{i=1}^{n}m_i\otimes \alpha^2_{\lambda,\xi_i}(x,a),\\
[1\otimes x,1\otimes y]_{\lambda}&=1\otimes[x,y]+\sum_{i=1}^{n} m_i\otimes \alpha^3_{\lambda,\xi_i}(x,y).
\end{align*}
Denote $\psi^k_i:=\alpha^k_{\lambda,\xi_i}$. Note that $S$ is also a local algebra with maximal ideal $\mathcal{M}_S=p^{-1}(\mathcal{M}_R)$. Also $\{n_i:1\leq i\leq n+1 \}$ is a basis of $\mathcal{M}_S$, where $n_i=(m_i,0)_q$ for $1\leq i\leq n$ and $n_{n+1}=(0,1)_q$. 

For any fixed element $(\psi^1_{n+1},\psi^2_{n+1},\psi^3_{n+1})\in C^2_{tot}(A,L)$, let us define the following $S$-bilinear maps:
\begin{enumerate}
\item A product $\bigcdot_s:(S\otimes A)^{\otimes 2}\rightarrow S\otimes A$ given by
$$(s_1\otimes a)\bigcdot_s(s_2\otimes b)=s_1 s_2\otimes ab+\sum_i^{n+1} s_1s_2n_i\otimes \psi^1_i(a,b).$$
\item An action $\mu_s:(S\otimes L)\otimes (S\otimes A)\rightarrow S\otimes A$ given by 
$$\mu_s(s_1\otimes x)(s_2\otimes a)=s_1s_2\otimes\mu(x)(a)+\sum_{i=1}^{n+1}s_1s_2n_i\otimes \psi^2_i(x,a).$$
\item And a bracket $[-,-]_s:(S\otimes L)^{\otimes 2}\rightarrow S\otimes L$ given by $$[s_1\otimes x,s_2\otimes y]_s=s_1s_2\otimes[x,y]+\sum_{i=1}^{n+1} s_1s_2n_i\otimes \psi^3_i(x,y).
$$
\end{enumerate}

\begin{Rem}\label{S-Op-Prop}
The $S$-bilinear maps in the triplet $(~\bigcdot_s, \mu_s,[-,-]_s)$ satisfy the following consitions:
\begin{align*}
P_A(a^s_1\bigcdot_s~ a^s_2)&=P_A(a^s_1)\bigcdot_{\lambda}P_A(a^s_2),\\
I_A(a)\bigcdot_s~ a^s&=I_A(a\bigcdot_{\lambda}E_A(a^s)),\\
P_A(\mu_s(l^s)(a^s))&=\mu_{\lambda}(P_L(l^s))(P_A(a^s)),\\
\mu_s(I_L(l))(a^s)&=I_A(\mu_{\lambda}(l)(E_A(a^s))),\\ 
P_L[l^s_1,l^s_2]&=[P_L(l^s_1),P_L(l^s_2)]_{\lambda},\\
[I_L(l),l^s]_s&=I_L[l,E_L(l^s)]_{\lambda}.
\end{align*}
\end{Rem} 

Let us define a map $\phi_A:(S\otimes A)^{\otimes 3}\rightarrow S\otimes A$ by
$$\phi_A(a^s_1,a^s_2,a^s_3)=a^s_1\bigcdot_s(a^s_2\bigcdot_s a^s_3)-(a^s_1\bigcdot_s a^s_2)\bigcdot_s a^s_3.$$

Next, by using Remark \ref{S-Op-Prop}, let us observe that $\phi_A(a^s_1,a^s_2,a^s_3)\in Ker(P_A)$ for all $a^s_1,a^s_2,a^s_3\in S\otimes A$. Let $a^s_k\in Ker(E_A)=Ker(\epsilon_s)\otimes A=\mathcal{M}_s\otimes A$ for some $1\leq k\leq 3$, then we can write $$a^s_k=\sum_{i=1}^{n+1}(n_i\otimes a_i)$$ for some $a_i\in A$, and $1\leq i\leq n+1$. Now it is easy to see that $\phi_A(a^s_1,a^s_2,a^s_3)=0$. Therefore, $\Phi_A$ induces the following map:
$$\bar{\phi_A}:\big(S\otimes A/Ker(E_A)\big)^{\otimes 3}\rightarrow Ker(P_A).$$ 

Note that since $E_A$ is a surjective map, we have an isomorphism $$\alpha_A:A \rightarrow \frac{S\otimes A}{Ker(E_A)}$$
defined by $\alpha_A(a)=1\otimes a+ Ker(E_A)$. Moreover $Ker(P_A)=\mathbb{K}i(1)\otimes A\cong A$, where the isomorphism (denoted by $\beta_A$) is given by:
$\beta_A(k n_{n+1}\otimes a)=k.a$. Therefore, we get $\mathbb{K}$-linear map 
$$\theta_A: A^{\otimes 3}\rightarrow A$$
such that $\theta_A=\beta_A\circ \bar{\phi_A}\circ \alpha_A^{\otimes 3}$ and we have the following relation:
$$n_{n+1}\otimes \theta_A(a_1,a_2,a_3)=\phi_A(1\otimes a_1,1\otimes a_2,1\otimes a_3)$$  

Similarly, let us define the following linear maps: \\
\begin{enumerate}
\item $\phi_1:(S\otimes L)\otimes (S\otimes A)^{\otimes 2}\rightarrow S\otimes A$ by
$$\phi_1(l^s,a^s_1,a^s_2)=\mu_s(l^s)(a^s_1\bigcdot_s a^s_2)-\mu_s(l^s)(a^s_1)\bigcdot_s a^s_2-a^s_1\bigcdot_s\mu_s(l^s)( a^s_2).$$
\item $\phi_2:(S\otimes L)^{\otimes 2}\otimes (S\otimes A)\rightarrow S\otimes A$ by
$$\phi_1(l^s_1,l^s_2,a^s)=\mu_s([l^s_1,l^s_2])(a^s)-\mu_s(l^s_1)(\mu_s(l^s_2)(a^s)-\mu_s(l^s_2)(\mu_s(l^s_1)( a^s)).$$ 
\item $\phi_L:(S\otimes L)^{\otimes 3}\rightarrow S\otimes L$ by
$$\phi_L(l^s_1,l^s_2,l^s_3)=[l^s_1,[l^s_2, l^s_3]_s]_s-[[l^s_1,l^s_2]_s,l^s_3]_s-[[l^s_1,l^s_3]_s, l^s_2)]_s.$$ 
 \end{enumerate}
Similar to the above discussion for $\phi_A$ and induced map $\theta _A$, we get the following induced linear maps by $\phi_1,~\phi_2,$ and $\phi_L$, respectively:
\begin{enumerate}
\item A linear map $\theta_1:L\otimes A^2\rightarrow A$ satisfying $$n_{n+1}\otimes \theta_1(l,a_1,a_2)=\theta_1(1\otimes l,1\otimes a_1,1\otimes a_2)$$  
\item A linear map $\theta_2:L^2\otimes A\rightarrow A$ satisfying  $$n_{n+1}\otimes \theta_2(l_1,l_2,a)=\theta_2(1\otimes l_1,1\otimes l_2,1\otimes a),$$
\item A linear map $\theta_L: L^3\rightarrow L$ satisfying the relation:
$$n_{n+1}\otimes \theta_L(l_1,l_2,l_3)=\phi_L(1\otimes l_1,1\otimes l_2,1\otimes l_3)$$  
\end{enumerate} 

The $3$-cochain $(\theta_A,\theta_1,\theta_2,\theta_L)$ is a cocycle. In fact, if $\beta_A: Ker(P_A)\rightarrow A$ and $\beta_L:Ker(P_L)\rightarrow L$ are $\mathbb{K}$-linear isomorphisms, then we get the following:
\begin{itemize}
\item $\beta_A^{-1}\big(\delta_H\theta_A(a,b,c,d)\big)=0$
\item $\beta_A^{-1}\big(\delta_L\theta_A(x,a,b,c)+\delta_H\theta_1(x,a,b,c)\big)=0$
\item $\beta_A^{-1}\big(\delta_L\theta_1(x,y,a,b)+\delta_H\theta_2(x,y,a,b)\big)=0$
\item $\beta_A^{-1}\big(\delta_L\theta_2(x,y,z,a)+\delta_H\theta_L(x,y,z,a)\big)=0$
\item $\beta_L^{-1}\big(\delta_L\theta_L(x,y,z,t)\big)=0$
\end{itemize} 
for any $x,y,z,t\in L$, and $a,b,c,d\in A$, i.e. $\delta_{tot}(\theta_A,\theta_1,\theta_2,\theta_L)=0$. 

Next, we show that the cohomology class of $(\theta_A,\theta_1,\theta_2,\theta_L)$ is independent of the choice of $\psi_{n+1}\in C^2_{tot}(A,L)$. Let $(\bigcdot_s, \mu_s,[-,-]_s)$, and $(\bigcdot_s^{\prime}, \mu_s^{\prime},[-,-]_s^{\prime})$ be two Courant pair structures on $(S\otimes A, S\otimes L)$ extending the deformation $\lambda=(._{\lambda},\mu_{\lambda},[-,-]_{\lambda})$. Let $3$-cochains $\Theta:=(\theta_A,\theta_1,\theta_2,\theta_L)$ and $\Theta^{\prime}:=(\theta_A^{\prime},\theta_1^{\prime},\theta_2^{\prime},\theta_L^{\prime})$ be the corresponding $3$-cocycles.
Then define a $2$-cochain $\gamma$ as follows:
$$(\gamma_{1},\gamma_2,\gamma_3):=(\bigcdot_s-\bigcdot_s^{\prime},\mu_s- \mu_s^{\prime},[-,-]_s-[-,-]_s^{\prime})$$

Then similar to the above discussion it follows that $\delta_{tot}(\gamma)=\Theta-\Theta^{\prime}$. Thus we get a map $\Theta_{\lambda}:H^2_{Harr}(R,\mathbb{K})\rightarrow H^3_{CP}(A,L)$ defined by $\Theta_{\lambda}([\psi])=[\Theta]$ for any class $[\psi]\in H^2_{Harr}(R,\mathbb{K})$. The map $\Theta_{\lambda}$ is called the obstruction map. Finally, we have the following result formulating a necessary and sufficient condition for extending the deformation $\lambda$ of $(A,L)$ (with base $(R,\mathcal{M}_R)$).

\begin{Thm}\label{1dimext}
Let $\lambda$ be a deformation of the Courant pair $(A,L)$ with base $(R,\mathcal{M}_R)$, and $[\psi]\in H^2_{Harr}(R,\mathbb{K})$ corresponds to a $1-$dimensional extension $S$ of $\mathbb{K}$-algebra $R$. Then $\lambda$ extends to a deformation of $(A,L)$ with base $(S,\mathcal{M}_S)$ if and only if $\Theta_{\lambda}([\psi])=0$.
\end{Thm}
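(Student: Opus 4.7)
The proof is a standard obstruction-theoretic argument in two directions, using machinery already developed in the section. The key point is that the construction of $(\bigcdot_s,\mu_s,[-,-]_s)$ is parametrized by an arbitrary choice $\psi_{n+1}=(\psi^1_{n+1},\psi^2_{n+1},\psi^3_{n+1})\in C^2_{tot}(A,L)$, and the resulting $3$-cocycle $\Theta=(\theta_A,\theta_1,\theta_2,\theta_L)$ exactly measures the failure of this triple to satisfy the associativity/Leibniz axioms. By Remark \ref{S-Op-Prop}, whatever $\psi_{n+1}$ we pick, the triple already projects via $(P_A,P_L)$ to $\lambda$ and restricts via $(I_A,I_L)$ to the original Courant pair on $(A,L)$; the only thing that can fail is that the triple itself be a Courant pair structure on $(S\otimes A, S\otimes L)$.

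For the necessity direction, suppose $\lambda$ admits an extension $\tilde\lambda=(\bigcdot_{\tilde\lambda},\mu_{\tilde\lambda},[-,-]_{\tilde\lambda})$ on $(S\otimes A, S\otimes L)$. Using the basis $\{n_i\}_{1\le i\le n+1}$ of $\mathcal{M}_S$, expand $\tilde\lambda$ in the form of equations \eqref{asso}--\eqref{brac} and read off the coefficient $\tilde\psi_{n+1}$ of $n_{n+1}$. Because $(P_A,P_L)$ intertwines $\tilde\lambda$ with $\lambda$, the coefficients of $n_1,\dots,n_n$ must coincide with the $\psi^k_i$ used to present $\lambda$. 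Hence if one uses $\tilde\psi_{n+1}$ as the choice of $\psi_{n+1}$ in the construction, the triple $(\bigcdot_s,\mu_s,[-,-]_s)$ literally equals $\tilde\lambda$, so the obstruction maps $\phi_A,\phi_1,\phi_2,\phi_L$ all vanish identically. The induced $\theta_A,\theta_1,\theta_2,\theta_L$ are therefore zero, so $\Theta_\lambda([\psi])=[\Theta]=0$.

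For the sufficiency direction, start with an arbitrary $\psi_{n+1}\in C^2_{tot}(A,L)$ and form $(\bigcdot_s,\mu_s,[-,-]_s)$, producing the cocycle $\Theta$. The hypothesis $\Theta_\lambda([\psi])=0$ gives a $2$-cochain $\gamma=(\gamma_1,\gamma_2,\gamma_3)\in C^2_{tot}(A,L)$ with $\delta_{tot}(\gamma)=\Theta$. Now replace $\psi_{n+1}$ by $\psi'_{n+1}:=\psi_{n+1}-\gamma$ and re-run the construction to obtain a modified triple $(\bigcdot'_s,\mu'_s,[-,-]'_s)$. By the already-verified independence argument (the paragraph comparing two Courant-type triples on $S\otimes A, S\otimes L$ that both extend $\lambda$), the associated $3$-cocycles satisfy $\Theta'-\Theta=-\delta_{tot}(\gamma)$, so $\Theta'=0$. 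This means the corresponding $\phi'_A,\phi'_1,\phi'_2,\phi'_L$ vanish, which is precisely the statement that $(\bigcdot'_s,\mu'_s,[-,-]'_s)$ is associative, defines a Leibniz bracket, and gives a Leibniz action by derivations. Remark \ref{S-Op-Prop} (which holds for \emph{any} $\psi_{n+1}$, in particular for $\psi'_{n+1}$) then shows that $(P_A,P_L)$ is a Courant pair morphism onto $(R\otimes A, R\otimes L)_\lambda$ and that the augmentation to $\mathbb{K}$ is a Courant pair homomorphism; together this exhibits $(\bigcdot'_s,\mu'_s,[-,-]'_s)$ as a deformation of $(A,L)$ with base $(S,\mathcal{M}_S)$ extending $\lambda$.

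The main obstacle — and the only real content — is the implication $\Theta'=0\Rightarrow$ genuine Courant pair structure, i.e.\ checking that the vanishing of the \emph{induced} maps $\theta_A,\theta_1,\theta_2,\theta_L$ already forces $\phi'_A,\phi'_1,\phi'_2,\phi'_L$ to vanish on all of $S\otimes A$ and $S\otimes L$, not merely on $1\otimes A$ and $1\otimes L$. This is handled exactly as in the paragraph introducing $\theta_A$: $S$-bilinearity of the triple reduces each obstruction map on general tensors $s_i\otimes a_i, s_i\otimes l_i$ to the case of $1\otimes(-)$ inputs, and the observation that if any input lies in $\mathcal{M}_S\otimes(-)$ then the obstruction automatically lands in $\mathcal{M}_S\cdot n_{n+1}\otimes(-)=0$ (since $n_{n+1}\in\mathcal{M}_S$ and $\mathcal{M}_S\cdot\mathbb{K}n_{n+1}\subset\ker p\cap\ker(\epsilon_S\circ\,\cdot\,)$ is already used to kill those terms). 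Once this reduction is in place, the equivalence of the two conditions is immediate.
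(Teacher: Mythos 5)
Your proposal is correct and follows essentially the same route as the paper: replacing $\psi_{n+1}$ by $\psi_{n+1}-\gamma$ produces exactly the triple the paper defines by subtracting $I_A\bigl(\gamma_1(E_A(-),E_A(-))\bigr)$ (and its analogues), since $s_1s_2\,n_{n+1}=\epsilon_S(s_1)\epsilon_S(s_2)\,n_{n+1}$, and the cocycle comparison $\Theta'-\Theta=-\delta_{tot}(\gamma)$ is the same step the paper uses to conclude $\Theta'=0$. Your necessity direction merely spells out what the paper dismisses as following "vacuously," and your closing reduction from $\theta=0$ to $\phi=0$ via $S$-multilinearity and $\mathcal{M}_S\cdot i(\mathbb{K})=0$ is exactly the paper's factorization of $\phi_A$ through $\bigl(S\otimes A/\mathrm{Ker}(E_A)\bigr)^{\otimes 3}$.
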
 

\begin{proof}
Let $\Theta_{\lambda}([\psi])=0$ for $[\psi]\in H^2_{Harr}(R,\mathbb{K})$. Also, let $[\psi]$ corresponds to a $1$-dimensional extension of $R$ given by:
$$0\rightarrow \mathbb{K}\xrightarrow{i}S\xrightarrow{p} R.$$
Let $(~\bigcdot_s,\mu_s,[-,-]_s)$ be a triplet of $S$-bilinear maps on the pair $(S\otimes A,S\otimes L)$ extending $\lambda$ and $\Theta:=(\theta_A,\theta_1,\theta_2,\theta_L)$ be the associated $3$-cocycle. Then $\Theta([\psi])=[\Theta]=0$, i.e $\Theta= \delta_{tot}(\gamma)$ for some $\gamma\in C^2_{tot}(A,L)$. Let us define the following:
\begin{itemize}
\item A product $\bigcdot_s^{\prime}:(S\otimes A)\otimes_S (S\otimes A) \rightarrow S\otimes A$, which is defined by $$(a^s_1,a^s_2)\mapsto a^s_1\bigcdot_s a^s_2- I_A(\gamma_1(E_A(a^s_1),E_A(a^s_2)))$$
for any $a^s_1,~a^s_2\in S\otimes A$.
\item An action  $\mu_s^{\prime}:(S\otimes L)\otimes(S\otimes A)\rightarrow S\otimes A$, which is defined by  $$(l^s,a^s)\mapsto \mu_s(l^s)(a^s)- I_A(\gamma_2(E_L(l^s),E_A(a^s)))$$
for any $l^s\in S\otimes L$, and $a^s\in S\otimes A$.

\item A bracket $[-,-]_s^{\prime}:(S\otimes L)\otimes_S (S\otimes L)\rightarrow S\otimes L$ which is defined by
 $$(l^s_1,l^s_2)\mapsto [l^s_1,l^s_2]_s- I_L(\gamma_3(E_L(l^s_1),E_L(l^s_2))).$$
for any $l^s_1,l^s_2\in S\otimes L$.
\end{itemize}

Thus we get a triplet $(~\bigcdot_s^{\prime}, \mu_s^{\prime},[-,-]_s^{\prime})$. Let us denote the associated $3$-cochain to the triplet $(~\bigcdot_s^{\prime}, \mu_s^{\prime},[-,-]_s^{\prime})$, by $\Theta^{\prime}$. Then by definition of the triplet $(~\bigcdot_s^{\prime}, \mu_s^{\prime},[-,-]_s^{\prime})$, we get $$\Theta^{\prime}-\Theta=-\delta_{tot}(\gamma)=-\Theta,$$ 
which implies $\Theta^{\prime}=0$. Therefore, the triplet $(~\bigcdot_s^{\prime}, \mu_s^{\prime},[-,-]_s^{\prime})$ is a Courant pair structure on $(S\otimes A, S\otimes L)$, which extends the deformation $\lambda$. The converse part  follows vacuously. 
\end{proof}

\begin{Rem}
 Let $\lambda $ be a deformation of Courant pair $(A,L)$ with base $R$ and $\mathbb{K}$-algebra $S$ is a $1$-dimensional extension of $R$:
\begin{equation}\label{extension}
0\rightarrow \mathbb{K} \xrightarrow{i} S\xrightarrow{p} R\rightarrow 0.
\end{equation}
 If $u:S\rightarrow S$ is an automorphism of this extension which corresponds to a class $\psi\in H^1_{Harr}(R,\mathbb{K})$, and $\lambda^{\prime}$ be a deformation of $(A,L)$ such that $p_*\lambda^{\prime}=\lambda$, then the $2$-cochain
 $$(._{u_*\lambda^{\prime}},\mu_{u_*\lambda^{\prime}},[-,-]_{u_*\lambda^{\prime}})-(._{\lambda^{\prime}},\mu_{\lambda^{\prime}},[-,-]_{\lambda^{\prime}})$$   
 is a cocycle in the class $d\lambda(\psi)$. Thus, if the differential $d\lambda: H^1_{Harr}(R,\mathbb{K})\rightarrow \mathcal{H}_{CP}$ is a  surjective map and a deformation $\lambda^{\prime}$ of $(A,L)$ with base $S$, satisfying $p_*\lambda^{\prime}=\lambda$, exists, then it is unique up to an equivalence of deformations and an automorphism of the extension \eqref{extension}.  
\end{Rem}

Let $M$ be a $R$-module (which is finite dimensional as a $\mathbb{K}$-module) such that $r.m=0$ for any $r\in \mathcal{M}_R$ and $m\in M$. Then consider an extension of $R$ by $M$ given by 
$$0\rightarrow M\xrightarrow{i} S\xrightarrow{p} R\rightarrow 0.$$

The equivalence class of this extension corresponds to an element $[\psi]\in H^2_{Harr}(R,M)$. An analogous calculation to the $1$-dimensional case yields a cocycle $\Theta_M\in C^3_{tot}(A,L;M\otimes A, M\otimes L)$ with the cohomology class $[\Theta_M]\in H^3_{CP}(A,L;M\otimes A, M\otimes L)=M\otimes H^3_{CP}(A,L)$. Then the obstruction map is given by:
$$\Theta_{\lambda}: H^2_{Harr}(R,M)\rightarrow M\otimes H^3_{tot}(A,L)$$
assigning $[\psi]\rightarrow [\Theta_M]$, and we have the following generalisation of the Theorem \ref{1dimext}:

\begin{Thm}\label{findimext}
Let $\lambda$ be a deformation of a Courant pair $(A,L)$ with base $(R,\mathcal{M}_R)$ and $\mathbb{K}$-algebra $S$ is an extension of $R$ by a finite dimensional $R$-module $M$:
$$0\rightarrow M\xrightarrow{i} S\xrightarrow{p} R\rightarrow 0,$$
corresponding to a class $[\psi]\in H^2_{Harr}(R,M)$. Then a deformation $\mu$ of $(A,L)$ with base $(S,\mathcal{M}_S)$ satisfying $p_*\mu=\lambda$ exists if and only if $\Theta_{\lambda}([\psi])=0$.
\end{Thm}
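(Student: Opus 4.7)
The plan is to mimic the argument of Theorem \ref{1dimext} with $\mathbb{K}$ replaced systematically by the $R$-module $M$, keeping in mind that since $\mathcal{M}_R \cdot M = 0$, the $R$-module structure on $M$ factors through the augmentation, so $M$ behaves like a direct sum of copies of $\mathbb{K}$. First I would pick a $\mathbb{K}$-linear section $q: R \to S$ of $p$, giving $S \cong R \oplus M$ as $\mathbb{K}$-vector spaces with multiplication controlled by the Harrison cocycle $\psi$. Choosing a basis $\{e_j\}_{1 \le j \le d}$ of $M$ and extending the basis $\{m_i\}$ of $\mathcal{M}_R$ to a basis of $\mathcal{M}_S$ via $\{(m_i,0)_q\} \cup \{(0,e_j)_q\}$, I can extend the structure triplet $(\bigcdot_\lambda, \mu_\lambda, [-,-]_\lambda)$ to an $S$-bilinear triplet $(\bigcdot_s, \mu_s, [-,-]_s)$ on $(S\otimes A, S\otimes L)$ by specifying arbitrary values $(\psi^1_j, \psi^2_j, \psi^3_j) \in C^2_{tot}(A,L)$ for each $e_j$. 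By construction this triplet satisfies the exact analogue of the compatibility relations of Remark \ref{S-Op-Prop}.

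Next I would define the associator $\phi_A$, the two mixed obstructions $\phi_1, \phi_2$, and the Jacobiator $\phi_L$ literally as in the $1$-dimensional case. Using the compatibility relations, each of these lands in $\ker(P_A) = i(M)\otimes A$ respectively $\ker(P_L) = i(M)\otimes L$, and vanishes whenever any argument lies in $\ker(E_A)$ or $\ker(E_L)$. Hence they descend through the isomorphisms $A \cong (S\otimes A)/\ker(E_A)$ and $L \cong (S\otimes L)/\ker(E_L)$, and through the isomorphisms $\beta_A: \ker(P_A) \xrightarrow{\sim} M \otimes A$, $\beta_L: \ker(P_L) \xrightarrow{\sim} M \otimes L$, to $\mathbb{K}$-linear maps $\theta_A: A^{\otimes 3} \to M \otimes A$, $\theta_1: L \otimes A^{\otimes 2} \to M \otimes A$, $\theta_2: L^{\otimes 2} \otimes A \to M \otimes A$, $\theta_L: L^{\otimes 3} \to M \otimes L$, assembling into a $3$-cochain $\Theta_M \in C^3_{tot}(A,L; M\otimes A, M\otimes L)$.

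The same identities that proved $\delta_{tot}(\theta_A, \theta_1, \theta_2, \theta_L) = 0$ in the $1$-dimensional case now give a cocycle valued in $(M\otimes A, M\otimes L)$, and the same comparison argument shows its cohomology class is independent of the initial choice of the $(\psi^k_j)$ and of the section $q$, depending only on $[\psi] \in H^2_{Harr}(R,M)$. Under the identification $H^3_{CP}(A,L; M\otimes A, M\otimes L) \cong M \otimes H^3_{CP}(A,L)$, which holds because the $R$-action on $M$ is trivial so the $L$- and $A$-action on $M \otimes A$, $M \otimes L$ are just $M$-linear extensions of the ones on $A$, $L$, this class is exactly $\Theta_\lambda([\psi])$. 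If $\Theta_\lambda([\psi]) = 0$, write $\Theta_M = \delta_{tot}(\gamma)$ with $\gamma = (\gamma_1, \gamma_2, \gamma_3)$ valued in $(M\otimes A, M\otimes L)$, and correct the triplet by subtracting $I_A \circ \gamma_k$ (and $I_L \circ \gamma_3$) precomposed with the appropriate augmentations, exactly as in Theorem \ref{1dimext}; the associated $3$-cocycle of the corrected triplet becomes $\Theta_M - \delta_{tot}(\gamma) = 0$, i.e.\ the corrected triplet is a Courant pair structure on $(S\otimes A, S\otimes L)$, which by construction extends $\lambda$, defining the sought $\mu$ with $p_*\mu = \lambda$. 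The converse is immediate: if $\mu$ exists, take its structure maps as the $S$-bilinear triplet; then $\phi_A, \phi_1, \phi_2, \phi_L$ all vanish identically, so $\Theta_M = 0$ and $\Theta_\lambda([\psi]) = 0$.

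The main obstacle I expect is not the obstruction computation itself, which is parallel to the $1$-dimensional case, but rather the careful verification that the identification $H^3_{CP}(A,L; M\otimes A, M\otimes L) \cong M \otimes H^3_{CP}(A,L)$ is natural enough that the assembled obstruction $\Theta_M$ corresponds under this isomorphism to $\Theta_\lambda([\psi])$ as defined between the statements of Theorems \ref{1dimext} and \ref{findimext}, and that the independence from the choice of $q$ and of the initial $(\psi^k_j)$ is genuinely encoded at the level of cohomology rather than merely at the level of $3$-cocycles.
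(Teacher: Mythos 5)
Your proposal is correct and follows essentially the same route as the paper: the paper itself only sketches this theorem by saying that "an analogous calculation to the $1$-dimensional case yields a cocycle $\Theta_M\in C^3_{tot}(A,L;M\otimes A, M\otimes L)$" and then invoking the identification $H^3_{CP}(A,L;M\otimes A,M\otimes L)\cong M\otimes H^3_{CP}(A,L)$ (which rests on the standing hypothesis $\mathcal{M}_R\cdot M=0$, exactly as you note). Your write-up supplies the details the paper leaves implicit — the choice of section, the basis of $M$, the descent of the associator/Jacobiator maps, and the correction of the triplet by a coboundary — and correctly identifies the only delicate point, namely the naturality of the isomorphism with $M\otimes H^3_{CP}(A,L)$.
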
 

\subsection{Construction of Versal deformation of a Courant pair}

Let $(A,L)$ be a Courant pair such that $ H^2_{CP}(A,L)$ is finite dimension vector space. Define $C_0:=\mathbb{K}$, $C_1:=\mathbb{K}\oplus \mathcal{H}_{CP}^{\prime}$ and the multiplication in $C_1$ by: $(k_1,h_1)\cdot(k_2,h_2)=(k_1\cdot k_2, k_1h_2+k_2h_1)$. This multiplication makes $C_1$ a $\mathbb{K}$-algebra. Now, consider the following extension:
$$0\rightarrow \mathcal{H}_{CP}^{\prime}\xrightarrow{i} C_1\xrightarrow{p}C_0\rightarrow 0,$$
 where $p$ is the projection map. Let $\eta_1$ be the universal infinitesimal deformation with base $C_1$. Let us assume that for $k\geq 1$, we have a finite dimensional local algebra $C_k$ and a deformation $\eta_k$ of $(A,L)$ with base $C_k$. Let us consider the vector space of $2$-chains: $Ch_2(C_k)$ in Harrison complex of $C_k$ and define a linear map:
$$\nu:H^2_{Harr}(C_k,\mathbb{K})\rightarrow (Ch_2(C_k))^{\prime}$$
mapping a cohomology class to a cocycle representing the class. Let us denote the dual of the map $\nu$ by
$f_{k}:Ch_2(C_k)\rightarrow H^2_{Harr}(C_k,\mathbb{K})^{\prime}.$
Then $f_k$ is a cocycle representing a cohomology class in the $2$nd cohomology space of $C_k$ with coefficients in $H^2_{Harr}(C_k;\mathbb{K})^{\prime}$. By Proposition \ref{CorresHarr}, $f_k$ corresponds to an equivalence class of the following extension of $C_k$:
$$0\rightarrow H^2_{Harr}(C_k,\mathbb{K})\xrightarrow{j_{k+1}}\bar{C}_{k+1}\xrightarrow{q_{k+1}}C_k \rightarrow 0$$

Let us consider the extension of the deformation $\eta_k$ of the Courant pair $(A,L)$ with base $C_k$ to the above extension $\bar{C}_{k+1}$. The associated obstruction corresponding to this extension $\Theta([f_{k}])$ gives a linear map 
$$\varphi_k:H^2_{Harr}(C_k,\mathbb{K})\rightarrow H^3_{CP}(A,L).$$ 
Let the dual of the map $\varphi_k$ is given by 
$$\varphi_k^{\prime}:H^3_{CP}(A,L)^{\prime}\rightarrow H^2_{Harr}(C_k,\mathbb{K})^{\prime}.$$ 
Thus we have an extension of $C_k$ by the cokernel of the map $\varphi_k^{\prime}$: 
$$0\rightarrow coker(\varphi_k^{\prime})\xrightarrow{i_{k+1}}\bar{C}_{k+1}/j_{k+1}\circ \varphi_k^{\prime}(H^3_{CP}(A,L)^{\prime})\xrightarrow{p_{k+1}}C_k \rightarrow 0$$
Note that $coker(\varphi_k^{\prime})=(ker(\varphi_k))^{\prime}$, also let us denote $C_{k+1}:=\bar{C}_{k+1}/j_{k+1}\circ \varphi_k^{\prime}(H^3_{CP}(A,L)^{\prime})$, then it yields the following extension:
\begin{equation}\label{ext}
(ker(\varphi_k))^{\prime}\xrightarrow{i_{k+1}}C_{k+1}\xrightarrow{p_{k+1}}C_k \rightarrow 0
\end{equation} 
The obstruction map corresponding to the extension \eqref{ext} is the restriction of the map $\varphi_k$ on $Ker(\varphi_k)$ and then by using Theorem \ref{findimext}, we have the following proposition:

\begin{Prop}
The deformation $\eta_k$ of the courant pair with base $C_k$ extends to a deformation $\eta_{k+1}$ with base $C_{k+1}$, which is unique upto an isomorphism and automorphism of the extension \eqref{ext}.
\end{Prop}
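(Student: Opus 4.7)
The plan is to observe that $C_{k+1}$ has been engineered so that the obstruction map for extending $\eta_k$ along the short exact sequence \eqref{ext} is identically zero, after which Theorem \ref{findimext} supplies the existence of $\eta_{k+1}$; uniqueness then follows by decomposing \eqref{ext} into a sequence of one-dimensional extensions and applying the Remark following Theorem \ref{1dimext}.

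First, I would identify the class $[\bar{\psi}] \in H^2_{Harr}(C_k,(\ker\varphi_k)')$ that classifies the extension \eqref{ext}. By construction, $C_{k+1}$ is the quotient $\bar{C}_{k+1}/j_{k+1}\circ\varphi_k'(H^3_{CP}(A,L)')$, so $[\bar{\psi}]$ is the image of the universal class $[f_k] \in H^2_{Harr}(C_k,H^2_{Harr}(C_k,\mathbb{K})')$ under the surjection $H^2_{Harr}(C_k,\mathbb{K})' \twoheadrightarrow (\ker\varphi_k)'$ dual to the inclusion $\ker\varphi_k \hookrightarrow H^2_{Harr}(C_k,\mathbb{K})$. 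Next, I would verify naturality of the obstruction construction with respect to $C_k$-module maps $N \to M$: push-out of a Harrison $2$-extension along $N \to M$ transports the associated obstruction in $N\otimes H^3_{CP}(A,L)$ to its image in $M\otimes H^3_{CP}(A,L)$. Applied to our surjection, this identifies $\Theta_{\eta_k}([\bar{\psi}])$ with the composition $\ker\varphi_k \hookrightarrow H^2_{Harr}(C_k,\mathbb{K}) \xrightarrow{\varphi_k} H^3_{CP}(A,L)$, which is zero by definition of $\ker\varphi_k$. Theorem \ref{findimext} then yields a deformation $\eta_{k+1}$ of $(A,L)$ with base $C_{k+1}$ satisfying $(p_{k+1})_*\eta_{k+1}=\eta_k$.

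For uniqueness up to equivalence and automorphism of \eqref{ext}, I would filter \eqref{ext} as an iterated sequence of one-dimensional extensions (choosing a basis of $(\ker\varphi_k)'$) and apply the Remark after Theorem \ref{1dimext} at each step. The hypothesis of that remark, surjectivity of the differential $d\eta_k: H^1_{Harr}(C_k,\mathbb{K}) \to \mathcal{H}_{CP}$, follows inductively from the universality of $\eta_1$ together with the fact that each $C_j$ has been constructed as a quotient of $\bar{C}_j$ that kills only the obstruction directions of $\varphi_{j-1}'$ and leaves the remaining cohomological data intact.

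The main obstacle is the naturality check in the second step: one has to confirm that the $3$-cocycle $\Theta_M$ built via the $\mathbb{K}$-linear isomorphisms $\beta_A,\beta_L$ in the module-valued generalization of the previous subsection transforms covariantly under push-out along $N \to M$, for each of the four components $(\theta_A,\theta_1,\theta_2,\theta_L)$. This is a direct cochain-level computation, but keeping track of the interaction between the four components of the obstruction and the structural equations of $(\bigcdot_s,\mu_s,[-,-]_s)$ is the most delicate part of the argument; once it is settled, the vanishing of $\Theta_{\eta_k}([\bar{\psi}])$ is tautological and the remainder is formal.
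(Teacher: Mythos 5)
Your proposal is correct and follows essentially the same route as the paper: the paper's entire argument is the observation, stated immediately before the Proposition, that the obstruction map of the extension \eqref{ext} is the restriction of $\varphi_k$ to $\ker(\varphi_k)$, hence zero, so Theorem \ref{findimext} gives existence, with uniqueness coming from the remark on surjectivity of the differential. You simply supply details the paper leaves implicit (the naturality of the obstruction under push-out of coefficients and the inductive surjectivity of $d\eta_k$), which is a faithful elaboration rather than a different argument.
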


Thus, using induction on $k$, we obtain a sequence of finite dimensional local algebras:
$$\mathbb{K}\xleftarrow{p_1}C_1\xleftarrow{p_2}\cdots\xleftarrow{p_k}C_k\xleftarrow{p_{k+1}}C_{k+1}\xleftarrow{p_{k+2}}\cdots$$
and deformations $\eta_k$ of the Courant pair $(A,L)$ with base $C_k$ such that $\eta_k=p_{k+1_*}\eta_{k+1}$. Denote the projective limit by $C:=\varprojlim_{k\rightarrow\infty}{C_k}$. Finally, the projective limit $\eta:=\varprojlim_{k\rightarrow\infty}{\eta_k}$ is a formal deformation of the Courant pair $(A,L)$ with the base $C:=\varprojlim_{k\rightarrow\infty}{C_k}$. 

\begin{Thm}
Let $(A,L)$ be a Courant pair and $dim\big( H^2_{CP}(A,L)\big)<\infty$, then the formal deformation $\eta$ with base $C$ is a versal deformation of the Courant pair $(A,L)$, where $\eta:=\varprojlim_{k\rightarrow\infty}{\eta_k}$, and the base $C:=\varprojlim_{k\rightarrow\infty}{C_k}$.
\end{Thm}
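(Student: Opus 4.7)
The plan is to verify the two defining properties of a versal deformation in the Definition. The uniqueness clause when $\mathcal{M}_R^2=0$ follows at once from the earlier Theorem on universal infinitesimal deformations applied to $C_1$, since $\mathcal{M}_R^2=0$ forces any algebra homomorphism $C\to R$ to factor through the quotient $C/\mathcal{M}_C^2\cong C_1$. Hence the substance of the argument is to produce, for a given formal deformation $\lambda$ of $(A,L)$ with base $R$, an algebra map $\phi:C\to R$ such that $\phi_*\eta\cong \lambda$.

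I would construct $\phi$ as the projective limit of compatible algebra maps $\phi_k:C_k\to R_k:=R/\mathcal{M}_R^{k+1}$ satisfying $(\phi_k)_*\eta_k\cong \lambda_k$, where $\lambda_k$ denotes the reduction of $\lambda$ modulo $\mathcal{M}_R^{k+1}$. The base case $k=1$ is exactly the earlier Theorem on universal infinitesimal deformations: it yields a unique $\phi_1:C_1\to R_1$ with $(\phi_1)_*\eta_1\cong \lambda_1$.

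For the inductive step, assume $\phi_k$ is in hand. The surjection $R_{k+1}\twoheadrightarrow R_k$ is an extension of $R_k$ by the finite-dimensional $R_k$-module $M_k:=\mathcal{M}_R^{k+1}/\mathcal{M}_R^{k+2}$ (on which $\mathcal{M}_{R_k}$ acts trivially), classified by some $[\psi]\in H^2_{Harr}(R_k,M_k)$. Pulling back via $\phi_k$ produces a class $\phi_k^{*}[\psi]\in H^2_{Harr}(C_k,M_k)$. Because $\lambda_{k+1}$ is itself a deformation of $(A,L)$ extending $\lambda_k\cong (\phi_k)_*\eta_k$ over this extension, Theorem \ref{findimext} forces the obstruction $\Theta_{(\phi_k)_*\eta_k}(\phi_k^{*}[\psi])$ to vanish in $M_k\otimes H^3_{CP}(A,L)$. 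Now I would invoke the universal character of the extension $\bar C_{k+1}\to C_k$, which by construction corresponds to the identity on $H^2_{Harr}(C_k,\mathbb K)$: any extension of $C_k$ by a finite-dimensional vector space arises as a push-out of $\bar C_{k+1}\to C_k$ along a uniquely determined linear map out of $H^2_{Harr}(C_k,\mathbb K)$. The vanishing of the obstruction ensures that the corresponding linear map to $M_k$ annihilates $\varphi_k^{\prime}(H^3_{CP}(A,L)^{\prime})$, hence factors through $C_{k+1}=\bar C_{k+1}/j_{k+1}\varphi_k^{\prime}(H^3_{CP}(A,L)^{\prime})$, producing an algebra map $\phi_{k+1}:C_{k+1}\to R_{k+1}$ that lifts $\phi_k$ along $R_{k+1}\twoheadrightarrow R_k$. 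Then $(\phi_{k+1})_*\eta_{k+1}$ and $\lambda_{k+1}$ are two deformations with base $R_{k+1}$ both restricting to $\lambda_k$ modulo $M_k$; by the Remark preceding this subsection they differ by an element in the image of the differential $d\lambda:H^1_{Harr}(C_k,\mathbb K)\to \mathcal{H}_{CP}$, which may be absorbed by composing $\phi_{k+1}$ with a suitable automorphism of the extension $C_{k+1}\to C_k$. This yields the desired compatibility $(\phi_{k+1})_*\eta_{k+1}\cong \lambda_{k+1}$.

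Finally, passing to the projective limit gives $\phi:=\varprojlim \phi_k: C\to R$ and $\phi_*\eta \cong \lambda$, completing the existence part. The principal difficulty is the inductive step: one must translate the vanishing of the Courant-pair obstruction in $H^3_{CP}(A,L)$ into the existence of an algebra lift $\phi_{k+1}$ by tracing through the dual construction of $C_{k+1}$ in terms of $H^2_{Harr}(C_k,\mathbb K)$ and the map $\varphi_k^{\prime}$, and then align the actual deformations using the automorphism freedom provided by $H^1_{Harr}(C_k,\mathbb K)$. This alignment is where the interplay between the Harrison cohomology of the base and the deformation cohomology of $(A,L)$ built into the construction of $C_{k+1}$ is most critically used.
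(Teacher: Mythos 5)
Your proposal is correct and follows essentially the same route as the paper, whose own ``proof'' is a one-line deferral to the Leibniz-algebra argument of Fialowski--Mandal--Mukherjee [FMM]: the standard induction over the extensions $C_{k+1}\to C_k$, with the base case given by the universal infinitesimal deformation, the inductive step driven by the vanishing of the obstruction class via Theorem \ref{findimext}, and the final alignment of $(\phi_{k+1})_*\eta_{k+1}$ with $\lambda_{k+1}$ using the automorphism freedom recorded in the Remark. Your sketch is in fact more detailed than what the paper provides.
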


\begin{proof}
The proof follows by an analogous argument for the proof of Leibniz algebra case done in \cite{FMM}.
\end{proof}
\section{Deformation of  Poisson algebras as Courant pairs}

In this section, we deduce a bicomplex of a Poisson algebra by considering it as a Courant pair. Let $A$ be a Poisson algebra and $M$ be a Poisson $A$-module, then define a bicomplex as follows: for $p\geq 2$, and $q\geq 0$, set $C^{p,q}:=C^{p,q}(A;M)=Hom(A^q\otimes A^p,M)$ and for $p=1$, and $q\geq 0$, set  $C^{1,q}(A;M)=Hom(A^{q+1},M)$. The vertical and horizontal maps are given as follows:  

\begin{itemize}
\item For $p\geq 1$, the vertical map $\delta_H:C^{p,q}\rightarrow C^{p+1,q} $  is the Hochschild coboundary given by equation \eqref{Hochschild Coboundary}. 
\item The horizontal map $\delta_L$ is the Leibniz coboundary given by equation \eqref{Leibniz coboundary}.
\end{itemize}

Then the bicomplex of the Poisson algebra $A$ with coefficients in Poisson module $M$ is given by the following diagram:
 
\[
\begin{CD}
.... @. ....@. .... @. ....@. \\
@AAA @A\delta_H AA @A\delta_H AA  @A\delta_H AA \\
0 @>>> Hom(A^2,M) @>\delta_L>> Hom(A\otimes A^2,M) @>\delta_L>> Hom(A^2\otimes A^2,M)
@>\delta_L >> ....\\
@AAA @A{\delta_H} AA @A{\delta_H} AA @A{\delta_H} AA\\
M @>\delta_L>> Hom(A,M) @>\delta_L>> Hom(A^2,M)@>\delta_L>>Hom(A^3,M)@>\delta_L>>...\\
\end{CD}
\]

Next, note that $\delta_H\circ \delta_L=\delta_L\circ \delta_H$. Thus, we have a total cochain complex given as follows: for $n\geq 0$,
 $$C^n_{tot}(A;M)=\bigoplus _{p+q=n}C^{p,q}(A;M)~\mbox{and}~\delta_{tot}:C^n_{tot}( A;M )\rightarrow C^{n+1}_{tot}( A;M ),$$ whose restriction to $C^{p,q}( A;M )$ is $\delta_H+(-1)^p\delta_L$, for $p\geq 2,~q\geq 0$, and to $C^{1,q}( A;M )$ is $\delta_H+\delta_L$. Let us denote the associated cohomology to this total complex by $H^*_{CP}(A;M)$. If $M=A$, then denote the cohomology of the Poisson algebra with coefficients in itself by $H^*_{CP}(A)$.
 
 Note that category of Poisson algebra is a subcategory of the category of Courant pairs. In particular, if $A$ is a Poisson algebra, then $(A,A)$ is a Courant pair, where the Leibniz algebra homomorphism $\mu:A\rightarrow Der(A)$ is given by the adjoint action of $A$ on itself. Then the cohomology $H^*_{CP}(A)$ is a deformation cohomology for a Poisson algebra $A$, and we get analogous results to the Courant pair case in Section $3$. 

\subsection{Deformations of Poisson structures on Heisenberg Lie algebra}

 Let $H$ be the three-dimensional complex Lie algebra with a basis $\{e_1,e_2,e_3\}$ such that the Lie bracket is given as follows
 $$[e_1,e_2]=-[e_2,e_1]=e_3, ~[e_2,e_3]=[e_1,e_3]=0.$$
 
 Then $H$ is a three-dimensional complex Heisenberg Lie algebra. Now, recall the Goze-Remm classification \cite{Goze} of Poisson algebra structures on the Heisenberg Lie algebra. The isomorphism classes of Poisson algebra structures on $H$ are divided into two families:
 
 \begin{itemize}
 \item First there is a one-parameter family of Poisson algebra structures on $H$,
$$\mathcal{P}_1(\xi) := \{e_1.e_2 = \xi e_3\},$$
where $\xi\in \mathbb{C}$. Here, the underlying commutative associative product of Poisson algebra $\mathcal{P}_1(\xi)$ is given by: $e_1.e_2 = \xi e_3 =e_2.e_1,$ and all other products are $0$. One can linearly extend the product on arbitrary elements. 
\item The only other isomorphism class of Poisson algebra structure on $H$ is the Poisson algebra 
$$\mathcal{P}_2 := \{e_1^2 = e_3\}.$$

\end{itemize}  

Now, let us fix an ordered basis of $H$ given by: $\mathfrak{B}_1:=\{e_1,e_2,e_3\}$, and an ordered basis of $H\otimes H$ given by:$$\mathfrak{B}_2:=\{e_1\otimes e_1,e_1\otimes e_2,e_1\otimes e_3,e_2\otimes e_1,e_2\otimes e_2,e_2\otimes e_3,e_3\otimes e_1,e_3\otimes e_2,e_3\otimes e_3\}.$$

\begin{Exm}\label{P1}
Let us consider the one-parameter family of Poisson algebra structures on $H$ given by 
$$\mathcal{P}_1(\xi) := \{e_1.e_2 = \xi e_3\},$$

 Let us fix $\xi\in \mathbb{C}$ and consider the Leibniz $2$-cocycles of the Lie algebra $H$ with coefficients in its adjoint representation. Let $\phi$ be a Leibniz $2$-cocycle, i.e. $\delta_L(\phi)=0$. Then the matrix of $\phi$ in terms of the ordered basis $\mathfrak{B}_2$ of $H\otimes H$ and the ordered basis $\mathfrak{B}_1$ of $H$, is of the following form:
\[ \left( \begin{array}{ccccccccc}
0 & x_9 & x_1 & -x_9 & 0 & -x_7 & -x_1 & x_7 & 0\\
0 & x_4 & x_5 & -x_4 & 0 & -x_1 & -x_5 & x_1 & 0\\
x_2 & x_{10} & x_6 & x_{11} & x_3& -x_8 & -x_6  & x_8 & 0\end{array} \right)\]

 Next, let us consider the underlying associative algebra and let $\psi$ be a Hochschild $2$-cocycle with coefficients in itself. In other words,
$\psi:H\otimes H\rightarrow H$ is a linear map and
$\delta_H (\psi)=0$, where
$$\delta_H \psi(e_i,e_j,e_k)=e_i.\psi(e_j,e_k)-\psi(e_i.e_j,e_k)+\psi(e_i,e_j.e_k)-\psi(e_i,e_j).e_k.$$
In terms of the ordered basis $\mathfrak{B}_2$ of the vector space $H\otimes H$, and ordered basis $\mathfrak{B}_1$ of $H$, the matrix of $\psi$ is of the form:
\[ \left( \begin{array}{ccccccccc}
y_1 & y_4 & 0 & y_4 & y_9 & 0 & 0 & 0 & 0\\
y_5 & y_1-y_2 & 0 & y_1-y_2 & y_4+y_3 & 0 & 0 & 0 & 0\\
y_6 & y_7 & y_2 & y_8 & y_{10} & y_3 & y_2 & y_3 & 0\end{array} \right)\] 

If $(\psi,\phi)$ is a $2$-cocycle in the total complex of the Poisson algebra $\mathcal{P}_1(\xi)$. Then $\phi$ and $\psi$ satisfy the following identities:
$$\delta_L(\phi)=0,~~\delta_H(\psi)=0,~~\mbox{and}~~{\delta_H}(\phi)+ \delta_L(\psi)=0,$$
which in turn gives the following form of the Leibniz $2$-cocycle $\phi$ and the Hochschild $2$-cocycle $\psi$:
\[ \phi=
\left( \begin{array}{ccccccccc}
0 & Y_{43} & 0 & -Y_{43} & 0 & 0 & 0 & 0 & 0\\
0 & -Y_{12} & 0 & Y_{12} & 0 & 0 & 0 &0 & 0\\
X_1 & X_2 & Y_{12} & X_3 & X_4& Y_{43} & -Y_{12}  & -Y_{43} & 0\end{array} \right)\]
 and 
\[ \psi=
\left( \begin{array}{ccccccccc}
y_1 & y_4 & 0 & y_4 & 0 & 0 & 0 & 0 & 0\\
0 & y_1-y_2 & 0 & y_1-y_2 & y_4+y_3 & 0 & 0 & 0 & 0\\
y_5 & y_6 & y_2 & y_7 & y_8 & y_3 & y_2 & y_3 & 0\end{array} \right)\] 
Here, $Y_{12}=\frac{2y_2-y_1}{2\xi}$, $Y_{43}=\frac{y_4-y_3}{2\xi}$, and $X_i,y_j\in \mathbb{C}$ for $~1\leq i\leq 4, ~1\leq j\leq 8$. Let us consider the following $2$-cocycles in the total complex:

\begin{itemize}
\item $\alpha_i:=(0,\Phi_i)$ with $X_i=1$, $y_k=0$ and $X_j=0$ for $i\neq j,~1\leq k\leq 8$,
\item $\alpha_i:=(\Psi_i,0)$ with $y_i=1$, $y_k=0$ and $X_j=0$ for $i\in \{5,6,7,8\},~i\neq k,~1\leq j\leq 4$, and 
\item $\beta_t:=(\Psi_t,\bar{\Phi}_t)$ with $y_t=1,~y_k=0$ and $X_j=0$ for $t\in \{1,2,3,4\},~t\neq k,~1\leq j\leq 4$
\end{itemize}

Then the set $\{\alpha_i,\beta_t:1\leq i\leq 8, 1\leq t\leq 4\}$ is a basis of $Z^2_{tot}(\mathcal{P}_1(\xi))$. If $(\psi,\phi)$ is a $2$-coboundary in the total complex, i.e. there exists $\gamma\in Hom(H,H)$ such that $\delta_L(\gamma)=\phi$, and $\delta_H(\gamma)=\psi$. Then matrix forms of $\phi$, and $\psi$ are given as follows:

\[ \phi=
\left( \begin{array}{ccccccccc}
0 & \frac{y_4}{\xi} & 0 & -\frac{y_4}{\xi} & 0 & 0 & 0 & 0 & 0\\
0 & -\frac{y_2}{\xi} & 0 & \frac{y_2}{\xi} & 0 & 0 & 0 &0 & 0\\
0 & y_6& \frac{y_2}{\xi} & -y_6 & 0& \frac{y_4}{\xi} & -\frac{y_2}{\xi}  & -\frac{y_4}{\xi} & 0\end{array} \right)\]
 and 
\[ \psi=
\left( \begin{array}{ccccccccc}
0 & y_4 & 0 & y_4 & 0 & 0 & 0 & 0 & 0\\
0 & -y_2 & 0 & -y_2 & 0 & 0 & 0 & 0 & 0\\
y_5 & y_6 & y_2 & y_6 & y_8 & -y_4 & y_2 & -y_4 & 0\end{array} \right)\] 
Let $\alpha_i^{\prime}$ be the total $2$-coboundary obtained by placing $y_i=1$, and $y_j=0$ for $j\neq i$, then we have the following identities:
\begin{enumerate}
\item $\alpha_5^{\prime}=\alpha_5,$
\item $\alpha_8^{\prime}=\alpha_8,$
\item $\alpha_2^{\prime}=\beta_2,$
\item $\alpha_4^{\prime}=\beta_4-\beta_3,$ and
\item $\alpha_6^{\prime}=\alpha_6+\alpha_7+\alpha_2-\alpha_3.$
\end{enumerate}

Thus a basis of the $2$nd cohomology space $\mathcal{H}_{CP}:=H^2_{CP}(\mathcal{P}_1(\xi))$ of the Poisson algebra $\mathcal{P}_1({\xi})$ is given by: $$\{[\alpha_1],[\alpha_2],[\alpha_3],[\alpha_6],[\alpha_7],[\beta_1],[\beta_3]\}$$
and hence $dim\mathcal{H}_{CP}=7$. Let $\{g_i:1\leq i\leq 7\}$ be the corresponding dual basis. Then the universal infinitesimal deformation $\eta_1:=(~\bigcdot_{\eta_1},[-,-]_{\eta_1})$ of the Poisson algebra $\mathcal{P}_1(\xi)$ with the base $(\mathbb{C}\oplus \mathcal{H}_{CP}^{\prime},\mathcal{H}_{CP}^{\prime})$ is given by:
\begin{align*}
[e_i,e_j]_{\eta_1}=& 1\otimes [e_i,e_j]+g_1\otimes \Phi_1(e_i,e_j)+g_2\otimes \Phi_2(e_i,e_j)+g_3\otimes \Phi_3(e_i,e_j)+\\
&g_6\otimes \bar{\Phi}_1(e_i,e_j)+g_7\otimes \bar{\Phi}_3(e_i,e_j),\\
e_i\bigcdot_{\eta_1}e_j=& 1\otimes e_i\bigcdot e_j+g_4\otimes \Psi_6(e_i,e_j)+g_5\otimes \Psi_7(e_i,e_j)+g_6\otimes \Psi_1(e_i,e_j)+\\
&g_7\otimes \Psi_3(e_i,e_j).\\
\end{align*}

Note that $2$nd Leibniz pair cohomology space $\mathcal{H}_{LP}:=H^2_{LP}(\mathcal{P}_1({\xi}))$ is spanned by the basis $\{[\alpha_6],[\alpha_7],[\beta_1],[\beta_3]\}$. Let $\{h_i:1\leq i\leq 3\}$ be the corresponding dual basis of $\mathcal{H}_{LP}^{\prime}$, then the universal infinitesimal deformation $\eta^{\prime}_1:=(~\bigcdot_{\eta^{\prime}_1},[-,-]_{\eta_1^{\prime}})$ of the Poisson algebra $\mathcal{P}_1(\xi)$ with base $(\mathbb{C}\oplus \mathcal{H}_{LP}^{\prime},\mathcal{H}_{LP}^{\prime})$ is given by:
\begin{align*}
[e_i,e_j]_{\eta^{\prime}_1}=& 1\otimes [e_i,e_j]+h_2\otimes \bar{\Phi}_1(e_i,e_j)+h_3\otimes \bar{\Phi}_3(e_i,e_j),\\
e_i\bigcdot_{\eta^{\prime}_1}e_j=& 1\otimes e_i\bigcdot e_j+h_1\otimes \Psi_6(e_i,e_j)+h_2\otimes \Psi_1(e_i,e_j)+h_3\otimes \Psi_3(e_i,e_j).\\
\end{align*}
\end{Exm}

\begin{Exm}\label{P2}
Let us consider the Poisson algebra structure on $H$ given by: 
$$\mathcal{P}_2 := \{e_1^2 = e_3\}.$$

Similar to the previous example  the matrix of a Leibniz $2$-cocycle $\phi$ in terms of the ordered basis $\mathfrak{B}_2$ of $H\otimes H$, and the ordered basis $\mathfrak{B}_1$ of $H$, is of the form:

\[ \left( \begin{array}{ccccccccc}
0 & x_9 & x_1 & -x_9 & 0 & -x_7 & -x_1 & x_7 & 0\\
0 & x_4 & x_5 & -x_4 & 0 & -x_1 & -x_5 & x_1 & 0\\
x_2 & x_{10} & x_6 & x_{11} & x_3& -x_8 & -x_6  & x_8 & 0\end{array} \right)\]
 
 If $\psi$ is a Hochschild $2$-cocycle of $\mathcal{P}_2$ with coefficients in itself, then in terms of $\mathfrak{B}_2$ and $\mathfrak{B}_1$, $\psi$ has the matrix form:

\[ \left( \begin{array}{ccccccccc}
y_1 & y_4 & 0 & y_4 & y_9 & 0 & 0 & 0 & 0\\
y_5 & y_1-y_2 & 0 & y_1-y_2 & y_4+y_3 & 0 & 0 & 0 & 0\\
y_6 & y_7 & y_2 & y_8 & y_{10} & y_3 & y_2 & y_3 & 0\end{array} \right)\] 

Let $(\psi,\phi)$ is a total $2$-cocycle in the total complex of the Poisson algebra $\mathcal{P}_2$. Then the following forms of matrices of $\phi$, and $\psi$ can be deduced:

\[ \phi=
\left( \begin{array}{ccccccccc}
0 & Y_{56} & 0 & -Y_{56} & 0 & 0 & 0 & 0 & 0\\
0 & X_2 & 0 & -X_2 & 0 & 0 & 0 &0 & 0\\
X_1 & X_4 & -y_2 & X_5 & X_3& -Y_{16} & y_2  & Y_{16} & 0\end{array} \right)\]
 and 
\[ \psi=
\left( \begin{array}{ccccccccc}
y_1 & y_4 & 0 & y_4 & 0 & 0 & 0 & 0 & 0\\
y_2 & y_6 & 0 & y_6 & 2y_4 & 0 & 0 & 0 & 0\\
y_3 & y_7 & y_5 & y_8 & y_9 & y_4 & y_5 & y_4 & 0\end{array} \right)\] 
where, $Y_{16}=2y_6-y_1$, $Y_{56}=y_6-y_5$. Thus, we get a basis of $Z^2_{tot}(\mathcal{P}_2)$ given by:\\ $\{\beta_3,\beta_4,\beta_7,\beta_8,\beta_9,\bar{\beta_1},\bar{\beta_2},\bar{\beta_5},\bar{\beta_6},\alpha_i:~1\leq i\leq 5\}$, where

\begin{itemize}
\item $\alpha_i=(0,\Phi_i)$ with $X_i=1$, $y_k=0$ and $X_j=0$ for $i\neq j,~1\leq i\leq 5,~1\leq k\leq 9$,
\item $\beta_i:=(\Psi_i,0)$ with $y_i=1$, $y_k=0$ and $X_j=0$ for $i\in \{3,4,7,8,9\},~k\neq i,~1\leq j\leq 5$, 
\item $\bar{\beta}_t:=(\Psi_t,\bar{\Phi}_t)$ with $y_t=1,~y_k=0$ and $X_j=0$ for $t\in \{1,2,5,6\},~t\neq k,~1\leq j\leq 5$.
\end{itemize}

Moreover, if $(\psi,\phi)$ is a $2$-coboundary in the total complex, then it follows that matrices of the Leibniz $2$-coboundary and the Hochschild $2$-coboundary are of the form:

\[ \phi=
\left( \begin{array}{ccccccccc}
0 & y_1 & 0 & -y_1 & 0 & 0 & 0 & 0 & 0\\
0 & y_2 & 0 & -y_2 & 0 & 0 & 0 &0 & 0\\
0 & X_4& -y_2 & -X_4 & 0& y_1 & y_2  & -y_1 & 0\end{array} \right)\]
 and 
\[ \psi=
\left( \begin{array}{ccccccccc}
y_1 & 0 & 0 & 0 & 0 & 0 & 0 & 0 & 0\\
y_2 & 0 & 0 & 0 & 0 & 0 & 0 & 0 & 0\\
y_3 & y_7 & -y_1 & y_7 & 0 & 0 & -y_1 & 0 & 0\end{array} \right)\] 

Next let us denote,
\begin{itemize}
\item $\alpha_4^{\prime}$:= the total $2$-coboundary obtained by placing $X_4=1,y_i=0$ for $i=1,2,3,7$;
\item $\beta_i^{\prime}:=$ total $2$-coboundary obtained by placing $y_i=1,X_4=0, y_j=0$ for $i=7,3,j\neq i;$ 
\item $\bar{\beta}_i^{\prime}:=$ total $2$-coboundary obtained by placing $y_i=1,X_4=0,y_j=0$ for $i=1,2,j\neq i$.
\end{itemize}   

Then we get the following identities:  
\begin{enumerate}
\item $\alpha_4^{\prime}=\alpha_4-\alpha_5,$
\item $\beta_7^{\prime}=\beta_7+\beta_8,$
\item $\beta_3^{\prime}=\beta_3,$
\item $\bar{\beta}_1^{\prime}=\bar{\beta}_1-\bar{\beta}_5,$ and
\item $\bar{\beta}_2^{\prime}=\bar{\beta}_2-\alpha_2.$
\end{enumerate}

 Therefore, the set $\{[\alpha_i],[\beta_4],[\beta_7],[\beta_9],[\bar{\beta_1}],[\bar{\beta_6}],~1\leq i\leq 4\}$ is a basis of the $2$nd cohomology space $\mathcal{H}_{CP}:=H_{CP}^2(\mathcal{P}_2)$. Let us denote the corresponding dual basis by: $\{g_i:1\leq i\leq 9\}$, then a universal infinitesimal deformation $\eta_1:=(\bigcdot_{\eta_1},[-,-]_{\eta_1})$ of the Poisson algebra $\mathcal{P}_2$ with the base $\mathbb{C}\oplus \mathcal{H}_{CP}^{\prime}$ can be expressed in terms of the basis of $2$nd cohomology space and the dual basis as follows:
\begin{align*}
[e_i,e_j]_{\eta_1}=& 1\otimes [e_i,e_j]+g_1\otimes \Phi_1(e_i,e_j)+g_2\otimes \Phi_2(e_i,e_j)+g_3\otimes \Phi_3(e_i,e_j)+\\
&g_4\otimes \Phi_4(e_i,e_j)+g_8\otimes \bar{\Phi}_1(e_i,e_j)+g_9\otimes \bar{\Phi}_6(e_i,e_j),\\
e_i\bigcdot_{\eta_1}e_j=& 1\otimes e_i\bigcdot e_j+g_5\otimes \Psi_4(e_i,e_j)+g_6\otimes \Psi_7(e_i,e_j)+\\
&g_7\otimes \Psi_9(e_i,e_j)+g_8\otimes \Psi_1(e_i,e_j)+g_9\otimes \Psi_6(e_i,e_j).\\
\end{align*}

Note that it easily follows that a basis of $2$nd Leibniz pair cohomology space $\mathcal{H}_{LP}:=H^2_{LP}(\mathcal{P}_2)$ of the Poisson algebra $\mathcal{P}_2$ is given by: $\{[\alpha_2],[\beta_4],[\beta_7],[\beta_9],[\bar{\beta_1}],[\bar{\beta_6}]\}.$ Let $\{h_i:1\leq i\leq 6\}$ be the corresponding dual basis, then the universal infinitesimal (Leibniz pair) deformation $\eta^{\prime}_1:=(~\bigcdot_{\eta^{\prime}_1},[-,-]^{\prime}_{\eta_1})$ of the Poisson algebra $\mathcal{P}_1(\xi)$ with base $(\mathbb{C}\oplus \mathcal{H}_{LP}^{\prime},\mathcal{H}_{LP}^{\prime})$ is given by:
\begin{align*}
[e_i,e_j]_{\eta^{\prime}_1}=& 1\otimes [e_i,e_j]+h_1\otimes \Phi_2(e_i,e_j)+h_6\otimes \bar{\Phi}_6(e_i,e_j),\\
e_i\bigcdot_{\eta^{\prime}_1}e_j=& 1\otimes e_i\bigcdot e_j+h_2\otimes \Psi_4(e_i,e_j)+h_3\otimes \Psi_7(e_i,e_j)+\\&h_4\otimes \Psi_9(e_i,e_j)+h_5\otimes \Psi_1(e_i,e_j)+h_6\otimes \Psi_6(e_i,e_j).\\
\end{align*}
\end{Exm}
\begin{Rem}
The above examples \ref{P1} and \ref{P2} show that the deformation of a Poisson algebra when viewed as a Courant pair differs from it's deformation as a Leibniz pair, even at the infinitesimal level. Moreover, by deforming a Poisson algebra as a Courant pair not only we get all the Leibniz pair deformations of the Poisson algebra but we can also get new deformations of this Poisson algebra as a Courant pair which is not a Leibniz pair. 
\end{Rem}

{\bf Conclusions:}
In this note, we discussed deformations of Courant pairs with a commutative algebra base. By the process of skew-symmetrization it yields deformation of  Leibniz pairs with commutative algebra base. We present an inductive construction of a versal deformation by starting from an universal infinitesimal deformation with the base of the deformation satisfying a given cohomological condition. If we consider a Poisson algebra example, then it is an object in both the category of Leibniz pairs and the category of Courant pairs. The deformation shows a difference even in the infinitesimal level and provides many more extra deformations from the Leibniz pair case. This computation shows that one may consider other examples of Leibniz pairs, which yield new deformations as Courant pair.  In \cite{FM}, such comparison of deformations is shown for Lie algebras considered in the larger category of Leibniz algebras. 

\newpage
\appendix

\section{Cohomology of associative algebras}
\begin{Def}
 Let $A$ be an associative algebra over $\mathbb{K}$, a bimodule $M$ over $A$ or, an $A$-bimodule is a  $\mathbb{K}$-module equipped with two actions (left and right) of $A$
$$ A\times M \longrightarrow M ~~\mbox{and}~ M\times A \longrightarrow M$$ such that 
$(a m) a^\prime=a (m a^\prime)$ for $a,a^\prime \in A$ and $m \in M$.

The actions of $A$ and $\mathbb{K}$ on $M$ are compatible, that is $(\lambda a) m=\lambda (a m)=a (\lambda  m)$ for $\lambda \in \mathbb{K},~a\in A ~and ~m \in M$. When $A$ has the identity $1$ we always assume that $1 m=m 1=m ~~\mbox{for}~ m \in M$.
\end{Def}
Given a bimodule $M$ over $A$, the  Hochschild cochain complex of $A$ with coefficients in $M$ is defined as follows.
Set $C^n(A,M)= Hom_\Kmath (A^{\otimes n},M)~; n\geq 0$ where $A^{\otimes n}=A \otimes \cdots \otimes A~(n\mbox{-copies})$.
Let $\delta_H:C^n(A,M) \rightarrow C^{n+1}(A,M)$ be the $\mathbb{K}$- linear map given by 
\begin{equation}\label{Hochschild Coboundary}
 \begin{split}
\delta_H f(a_1,a_2,\ldots,a_{n+1})&=a_1.f(a_2,\ldots,a_n)+\sum_{i=1}^{n}(-1)^i f(a_1,\ldots,a_i a_{i+1},\ldots,a_{n+1})\\
&+(-1)^{n+1} f(a_1,\ldots,a_{n-1},a_{n}).a_{n+1} 
\end{split}
\end{equation} 
Then $\delta_H^2=0$ and the complex $(C^{*}(A,M),\delta_H)$ is called the Hochschild complex of $A$ with coefficients in the $A$-bimodule $M$.
 When $M=A$, where the actions are given by algebra operation in $A$ we denote the complex $C^{*}(A,A)$ by $C^{*}(A)$. The graded space $C^*(A)$ is a differential graded Lie algebra (or DGLA in short ) (see \cite{G1}).
\begin{Def}
A graded Lie algebra $\mathfrak{L}$ is a graded module $\mathfrak{L}=\{L_i\}_{i\in \mathbb{Z}}$ together with a linear map of degree zero, $[-,-]:\mathfrak{L}\otimes \mathfrak{L}\longrightarrow \mathfrak{L}$, $x\otimes y\mapsto [x,y]$ satisfying
\begin{equation*}
 \begin{split}
  &(i)[x,y]=-(-1)^{|x||y|}[y,x] \mbox {(graded skewsymmetry)}\\
&(ii)(-1)^{|x||z|}[x,[y,z]]+(-1)^{|y||x|}[y,[z,x]]+(-1)^{|z||y|}[z,[x,y]]=0 ~~\mbox{( graded Jacobi identity )}\\
 \end{split}
\end{equation*}
for $x,y,z \in \mathfrak{L}$, where $|x|$ denotes the degree of $x$.

A differential graded Lie algebra is a graded Lie algebra equipped with a differential $d$ satisfying 
$$d[x,y]=[dx,y]+(-1)^{|x|}[x,dy].$$
\end{Def}
\begin{Rem}\label{DGLA}
The shifted Hochschild complex $C^{*}(A,A)=C^{*+1}(A, A)$ is a DGLA where the graded Lie bracket is also called the Gerstenhaber bracket. 

Let $\phi$ be a Hochschild $p$-cochain and $\theta$ be a Hochschild $q$-cochain. The Gerstenhaber bracket of $\phi$ and $\theta$ is the $(p+q-1)$-cochain defined by
$$[\phi,\theta]=\phi\circ\theta-(-1)^{(p-1)(q-1)}\theta\circ \phi,$$
where
$$\phi\circ\theta(a_1,\cdots,a_{p+q-1})=\sum_{i=0}^{p-1}(-1)^{i(q+1)}\phi(a_1,\cdots,a_i,\theta(a_{i+1},\cdots,a_{i+q}),a_{i+q+1},\cdots,a_{p+q-1}).$$

Also, note that if $\alpha_0$ is the associative multiplication, then $\delta_H$ can be written in terms of the Gerstenhaber bracket and the multiplication $\alpha_0$:
$$\delta_H \phi=(-1)^{|\phi|}[\alpha_0,\phi].$$
\end{Rem}

{\bf Harrison complex for commutative algebras}

Let $R$ be a commuative algebra over $\mathbb{K}$. Let $(\oplus_{q\geq 0} C_q(R),\delta)$ denotes the standard Hochschild complex, where $C_q(R):=R^{\otimes(q+1)}$ be an $R$-module. Let $Sh_q(R)$ be the $R$-submodule generated by the elements in $C_q(R)$ of the form:
$$s_p(r_1,\cdots,r_{q})=\sum_{(i_1,\cdots,i_q)\in Sh(p,q-p)} sgn(i_1,\cdots,i_q)(r_{i_1},\cdots,r_{i_q})$$
for any $r_i\in R, ~1\leq i\leq q$, and $0<p<q$. In fact, $(Sh_*(R),\delta)$ is a subcomplex of $(C_*(R),\delta)$. Hence, we have a chain complex which is called Harrison complex given by
$$Ch(R)=(\oplus_{q\geq 0}Ch_q(R), \delta)$$
where $Ch_q(R):=C_q(R)/Sh_q(R)$. 

Let $M$ be an $R$-module then the Harrison cochain complex with coefficients in $M$ is given by $(\oplus_{q\geq 0}Ch^q(R,M)=Hom(Ch_q(R),M),\partial)$. The cohomology of this cochain complex is called Harrison cohomology, i.e.
$$H^q_{Harr}(R,M)=H^q(Hom(Ch(R),M)).$$
 Recall that if $(R,\mathcal{M})$ is a commutative local algebra and $M$ is an $R$-module such that $\mathcal{M}.M=0$. Then we have following isomorphism:
 $$H^q_{Harr}(R,M)\cong H^q_{Harr}(R,\mathbb{K})\otimes M.$$
 
 \begin{Def}
 A $\mathbb{K}$-algebra $S$ is called an extension of $\mathbb{K}$-algebra $R$ by an $R$-module $M$ if we have an exact sequence of $\mathbb{K}$-modules:
$$0 \rightarrow M\xrightarrow{i} S \xrightarrow{p} R\rightarrow 0,$$
where $p:S\rightarrow R$ is a $\mathbb{K}$-algebra homomorphism, and $i(M)$ becomes an $S$-module with the module action: $s.i(m)=i(p(s).m)$.    
  \end{Def} 
Now, there are following interpretations of low dimensional Harrison cohomology spaces:
\begin{Thm}\label{CorresHarr}
\begin{enumerate}
\item The $\mathbb{K}$-module $H^1_{Harr}(R,M)\cong Der_{\mathbb{K}}(R,M)$.
\item There is a bijective correspondence between cohomology classes in $H^2_{Harr}(R,M)$ and the equivalence classes of extensions of $R$ by the $R$-module $M$.
\end{enumerate}
\end{Thm}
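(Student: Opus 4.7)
The plan is to treat the two parts separately, both by unwinding the Harrison coboundary formulas and matching with the classical derivation/extension picture. For part (1), I would observe that in degree one the shuffle subspace is trivial: the shuffle sum $s_p$ requires $0<p<q$, so $Sh_1(R)=0$, hence $Ch^1(R,M)=Hom_{\mathbb{K}}(R,M)$ and the Harrison coboundary of $f:R\to M$ is the Hochschild coboundary $\partial f(r_1,r_2) = r_1 f(r_2) - f(r_1 r_2) + f(r_1) r_2$. Since $R$ is commutative and $M$ is an $R$-module (so its left and right actions agree), the 1-cocycle condition is exactly the Leibniz rule $f(r_1 r_2) = r_1 f(r_2) + f(r_1) r_2$, identifying $Z^1_{Harr}(R,M)$ with $Der_{\mathbb{K}}(R,M)$. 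In degree zero the cochain space is $M$ and the coboundary $\partial m(r) = rm-mr$ vanishes because $M$ is symmetric, so there are no nontrivial 1-coboundaries and $H^1_{Harr}(R,M)\cong Der_{\mathbb{K}}(R,M)$.

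For part (2) I would use the standard ``cocycle from section'' construction. Starting from an extension
\[
0\to M\xrightarrow{i} S\xrightarrow{p} R\to 0,
\]
pick a $\mathbb{K}$-linear section $q:R\to S$ of $p$. Every element of $S$ is uniquely $q(r)+i(m)$, and $q(r_1)q(r_2) - q(r_1 r_2)\in\ker(p)=i(M)$, so we can define $\psi_q(r_1,r_2) := i^{-1}\bigl(q(r_1)q(r_2) - q(r_1 r_2)\bigr)\in M$. Associativity of $S$ translates directly into the Hochschild 2-cocycle identity for $\psi_q$, and the commutativity of $S$ (our algebras are commutative, and $i(M)$ is central by the prescribed $S$-module structure) forces symmetry $\psi_q(r_1,r_2)=\psi_q(r_2,r_1)$. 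Since the element $s_1(r_1,r_2)=(r_1,r_2)-(r_2,r_1)$ spans $Sh_2(R)$, this symmetry is precisely the Harrison condition, so $\psi_q\in Z^2_{Harr}(R,M)$. A direct check shows that changing the section $q\rightsquigarrow q'=q+i\circ g$ for some $g:R\to M$ modifies $\psi_q$ by the Harrison coboundary of $g$, and that an equivalence of extensions $\phi:S\to S'$ intertwines sections so that the corresponding cocycles differ by a coboundary; thus $[\psi_q]\in H^2_{Harr}(R,M)$ depends only on the equivalence class of the extension.

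Conversely, given $[\psi]\in H^2_{Harr}(R,M)$, pick a representative $\psi$ and define $S_\psi := R\oplus M$ as a $\mathbb{K}$-module with multiplication
\[
(r_1,m_1)\cdot(r_2,m_2) := \bigl(r_1 r_2,\, r_1 m_2 + m_1 r_2 + \psi(r_1,r_2)\bigr).
\]
The cocycle identity forces associativity, the Harrison (shuffle) condition forces commutativity, and the projection and inclusion give an extension of $R$ by $M$; applying the section $q(r)=(r,0)$ recovers $\psi$ as its associated cocycle. Cohomologous representatives $\psi'=\psi+\partial g$ give equivalent extensions via the isomorphism $(r,m)\mapsto(r,m+g(r))$, completing the bijection. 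The one genuinely delicate point --- and the main obstacle in making the argument airtight --- is the normalization of a Harrison 2-cocycle so that $\psi(1,r)=\psi(r,1)=0$ (guaranteeing that $(1,0)$ is a strict unit of $S_\psi$); here I would use the cocycle identity on $(1,1,r)$ to show $\psi(1,r)=\psi(1,1)\cdot r$ and then subtract the coboundary of $f(r):=\psi(1,1)\cdot r$, which is the Harrison coboundary of a genuine 1-cochain, to enforce the normalization without changing the cohomology class.
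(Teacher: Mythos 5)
Your proof is correct, and it is the standard argument for these classical facts about Harrison cohomology. The paper itself offers no proof of this theorem: it is stated in the appendix as a recalled result (the usual low-degree interpretations of Harrison cohomology for commutative algebras), so there is nothing to compare against; your write-up simply supplies the omitted details. The two points worth keeping as you flesh it out are exactly the ones you flag: the identification of the shuffle condition in degree $2$ with symmetry of the cocycle, which is what makes $S_\psi$ commutative and matches the paper's use of symmetric extensions such as $(r_1,k_1)_q(r_2,k_2)_q=(r_1r_2,\epsilon(r_1)k_2+\epsilon(r_2)k_1+\psi(r_1,r_2))$, and the normalization $\psi(1,r)=\psi(r,1)=0$ via subtracting the coboundary of $f(r)=\psi(1,1)r$, which your computation handles correctly.
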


\section{Cohomology of Leibniz algebras}

\begin{Def} A  Leibniz algebra (left Leibniz algebra) is a $\Kmath$-module $L$, equipped with a bracket
 operation, which is $\mathbb{K}$-bilinear and satisfies the Leibniz identity,
 $$
 [x,[y,z]]=[[x,y],z] + [y, [x,z]] \quad for \quad x,y,z \in L.
 $$
\end{Def}
In the presence of antisymmetry of the bracket operation, the Leibniz identity is
equivalent to the Jacobi identity, hence any Lie algebra is a Leibniz
algebra.
\begin{Rem}
For a given Leibniz algebra $L$ as defined above,  the left adjoint operation $[x,-]$ is a derivation on $L$ for any
$x \in L.$  Analogously, Leibniz algebra ( right Leibniz algebra ) can be defined by requiring that the right adjoint map $[-,x]$ is  a derivation for any $x \in L.$ In that case, the Leibniz identity appeared in the above definition would be of the form: $$[x,[y,z]]= [[x,y],z]-[[x,z],y] ~~\mbox{for}~x,~y,~z \in L.$$
 Sometime, in the literature ( e.g., \cite{Lod1, Lod2, LP}) right Leibniz algebra has been considered.  All our Leibniz algebras in the present discussion will be left Leibniz algebras unless otherwise stated.
\end{Rem}
\begin{Exm}\label{Lex1}
Let $(\mathfrak{g},[-,-])$ be a Lie algebra and  $V$ be a $\mathfrak{g}$-module with the action $~(x,v)\mapsto x.v$. Take $L=\mathfrak{g}\oplus V $ with the bracket given by
$$[(x,v),(y,w)]_L=([x,y],x.w)~~\mbox{for}~~(x,v), (y,w) \in L .$$
Then $(L,[-,-]_L)$ is a Leibniz algebra. This bracket is called {\it hemisemidirect} product in \cite{KW}.
\end{Exm}

This shows that one can associate a Leibniz algebra $L$ to a smooth manifold $M$. If $\mathfrak{g}$ is the Lie algebra of Vector fields $\chi(M)$ over a smooth manifold $M$, then $C^{\infty}(M)$ is a $\chi(M)$-module with the left action $\chi(M) \times C^{\infty}(M) \rightarrow C^{\infty}(M):~(X,f)\mapsto X(f)$. It follows that  $L= \chi(M)\oplus C^{\infty}(M)$ is a Leibniz algebra with the bracket given by
$$[(X,f),(Y,g)]=([X,Y],X(f)).$$
 There are various other sources to generate more examples of  Leibniz algebras which may not be a Lie algebra.
\begin{Exm}
Let $A$ be an associative $\Kmath$-algebra equipped with a $\Kmath$-linear map $D: A\rightarrow A$, such that $D^2=D.$ 
Define a bilinear map $[-,-]: A\otimes A \rightarrow A$ by 
$$[x,y]:=(Dx)y-y(Dx)~~~ \mbox{for all}~ a,b\in A. $$
Then $(A,[-,-])$ is a left Leibniz algebra. In general, $A$ with the above bracket is not a Lie algebra unless $D=id.$
\end{Exm}

\begin{Exm}
Let $(L,d)$ be a differential Lie algebra with the Lie bracket $[-,-]$. Then $L$ is a Leibniz algebra with the bracket operation $[x,y]_d:= [dx,y]$. The new bracket on $L$ is  called the derived bracket(see \cite{DerB}).
\end{Exm}
One can find more examples appeared in  \cite{Lod1, Lod2, LP} , \cite {HM02, JMF09}.

\begin{Def}
Suppose  $L$ and $L^{\prime}$ are Leibniz algebras, a linear map $\phi: L \rightarrow L^{\prime}$ is called a homomorphism of Leibniz algebras if it preserves the Leibniz bracket, i.e.
$$\phi([x,y]_L)= [\phi(x), \phi (y)]_{L^{\prime}} ~~\mbox{for all}~x,y \in L. $$
\end{Def}

Leibniz algebras with Leibniz algebra homomorphisms form a category of Leibniz algebras, which contains the category of Lie algebras as a full subcategory.

Let $L$ be a Leibniz algebra and $M$ be a representation of $L$. By
definition (\cite{LP,JMF09}), $M$ is a $\Kmath$-module equipped with two actions (left and
right) of $L$,
$$
[-,-]:L \times M \rightarrow M \qquad \text{and} ~~~[-,-]:M \times L
\rightarrow M
$$
such that for $m \in M$ and $x,y \in L$ following hold true,
\begin{equation*}
\begin{split}
[m,[x,y]] =~ & [[m,x],y] + [x,[m,y]]\\
[x,[m,y]]=  ~& [[x,m],y] + [m,[x,y]]\\
[x,[y,m]]= ~ &[[x,y],m] + [y,[x,m]].
\end{split}
\end{equation*}
Define $CL^n(L;M):=\text{Hom}_{\Kmath}(L^{\otimes n},M)$, $n \geq 0$.
Let
$$
\delta^n: CL^n(L;M) \rightarrow CL^{n+1}(L;M)
$$
be a $\Kmath$-homomorphism defined by
\begin{multline}\label{Leibniz coboundary}
(\delta^n \psi)(X_1,X_2, \cdots , X_{n+1}) \\=(-1)^{n+1}\Big(\sum_{i=1}^{n} (-1)^{i-1} [X_i,\psi(X_1, \cdots, \hat{X_i}, \cdots, X_{n+1})]
 +(-1)^{n+1} [\psi(X_1, \cdots, X_{n}), X_{n+1}]  \\+
 \sum_{1\leqslant i < j \leqslant n+1} (-1)^{i}
 \psi(X_1, \cdots, \hat{X_i},\cdots, X_{j-1},[X_i,X_j],X_{j+1}\cdots, X_{n+1})\Big).
\end{multline}

Then $(CL^*(L;M),\delta)$ is a cochain complex, whose cohomology is denoted by $HL^*(L; M)$, is
called the cohomology of the Leibniz algebra $L$ with coefficients in
the representation $M$ (see \cite{LP, JMF09}). In particular, $L$ is a representation of
itself with the obvious actions given by the Leibniz algebra bracket of $L$.


\end{document}